\title{Edge densities of drawings of graphs with one forbidden cell} 
\author{Benedikt Hahn}{Graz University of Technology, Austria}{benedikt.hahn@tugraz.at}{https://orcid.org/0009-0004-2325-6997}{This research was funded in whole or in part by the Austrian Science Fund (FWF) 10.55776/DOC183.}
\author{Torsten Ueckerdt}{Karlsruhe Institute of Technology, Germany}{torsten.ueckerdt@kit.edu}{https://orcid.org/0000-0002-0645-9715}{funded by the Deutsche Forschungsgemeinschaft (DFG, German Research Foundation) – 520708409.}
\author{Birgit Vogtenhuber}{Graz University of Technology, Austria}{birgit.vogtenhuber@tugraz.at}{https://orcid.org/0000-0002-7166-4467}{This research was funded in whole or in part by the Austrian Science Fund (FWF) 10.55776/DOC183.}
\authorrunning{Benedikt Hahn, Torsten Ueckerdt, and Birgit Vogtenhuber} 
\keywords{%
    Edge density,
    cell types,
    forbidden substructures,
    non-homotopic drawings,
    simple drawings
}
\begin{document}
\maketitle

\begin{abstract}
    A connected topological drawing of a graph divides the plane into a number of cells.
    The type of a cell $c$ is the cyclic sequence of crossings and vertices along the boundary walk of $c$.
    For example, all triangular cells with three incident crossings and no incident vertex share the same cell type.
    When a non-homotopic drawing of an $n$-vertex multigraph $G$ does not contain any such triangular cell, Ackerman and Tardos [JCTA 2007] proved that $G$ has at most $8n-20$ edges, while Kaufmann, Klemz, Knorr, Reddy, Schröder, and Ueckerdt [GD 2024] showed that this bound is tight.
    
    In this paper, we initiate the in-depth study of $\celltype$\emph{-free} drawings, that is, drawings that do not contain any cell of one fixed cell type \celltype, and investigate the edge density of the corresponding graphs, i.e., the maximum possible number of edges.
    We consider non-homotopic as well as simple drawings, multigraphs as well as simple graphs, and every possible cell type $\celltype$.
    For every combination of drawing style, graph type, and cell type, we give upper and lower bounds on the corresponding edge density.
    With the exception of the cell type with four incident crossings and no incident vertex, we show for every cell type \celltype that the edge density of $n$-vertex (multi)graphs with \celltype-free drawings is either linear in $n$ or superlinear in $n$.
    In most cases, our bounds are tight up to an additive constant.
    We further consider the question which simple graphs admit a simple drawing without some given cell type(s). 
    For the class of cell types that are not incident to any crossing, we give a complete characterization of all simple graphs that admit a simple drawing without any such cell.
                
    Additionally, we improve the current lower bound on the edge density of simple graphs that admit a non-homotopic quasiplanar drawing from $7n-28$ to $7.5n-28$.
\end{abstract}

\section{Introduction} 
\label{sec:introduction}

Many fundamental classes of graphs are defined by admitting a drawing without a forbidden configuration or pattern.
This includes planar graphs (forbidding two crossing edges), as well as many beyond-planar graph classes such as $k$-planar graphs (forbidding an edge with $k+1$ crossings), quasiplanar graphs (forbidding three pairwise crossing edges), fan-planar graphs (forbidding an edge crossed by two independent edges\footnote{The actual definition is more complex but irrelevant here.}) and many others.
Planar and beyond-planar graphs are of eminent importance, both from a practical and a theoretical point of view \cite{didimoSurveyBeyondGraphDrawing2020}.
Nevertheless, many fundamental properties are still not sufficiently well understood.
One of the most fundamental questions in this area concerns the \emph{edge~density} of a beyond-planar graph class $\mathcal{G}$, that is, the maximum possible number of edges of any $n$-vertex graph in~$\mathcal{G}$.
There is a long history of edge density results, ranging from $k$-planar graphs~\cite{pachGraphsDrawnFew1997,pachImprovingCrossingLemma2006,ackermanTopologicalGraphsMost2019}, $k$-quasiplanar graphs~\cite{ackerman_quasiplanar,ackermanMaximumNumberEdges2009}, $k$-bend RAC graphs~\cite{didimoDrawingGraphsRight2011,angeliniRACDrawingsGraphs2020}, $(k,l)$-grid free graphs~\cite{pachTopologicalGraphsNo2005}, and more to the recently introduced Density Formula~\cite{kaufmann_density_formula}. 

We study the \emph{cells} of a drawing, that is, the regions of $\mathbb{R}^2$ delimited by the vertices and edges of the drawing. 
The \emph{type} of a cell $c$ is the cyclic order of edge segments, vertices, and crossings along the boundary of $c$.
For example, a \threezero-cell is incident to three edge segments, three crossings, and no vertex.
Distinguishing cells by their type is often useful when reasoning about drawings (see for example \cite{harborthEdgesWithoutCrossings1974}).
The recently developed \emph{Density Formula}~\cite{kaufmann_density_formula}, whose various applications usually come down to counting and relating different cell types in drawings, has further increased the need for a good grasp on the relation between types of cells in a drawing and the structure of the underlying graph.
Interestingly, Ackerman and Tardos~\cite{ackerman_quasiplanar} proved already in 2007, while considering quasiplanar drawings, that non-homotopic $n$-vertex drawings with no \threezero-cell contain at most $8n-20$ edges.

\subparagraph*{Our contribution.}

In this paper, we consider for each cell type \celltype{} the corresponding beyond-planar graph class $\mathcal{G}_{\celltype}$ consisting of all graphs that admit a drawing without \celltype-cells.

As our main results, we present upper and lower bounds on the edge densities of graphs in~$\mathcal{G}_{\celltype}$ when arbitrary drawings are admitted, and when drawings are required to be simple or non-homotopic\footnote{In a simple drawing, two edges can intersect at most once, while non-homotopic have a similar but slightly weaker restriction. Both notions are formally defined in \cref{sec:prelims}}.
For each of these settings and each cell type (with one exception), we determine whether the asymptotic edge density is linear or superlinear.
Moreover, in most cases, our bounds are tight up to an additive constant.
Our results and some previous related work are summarized in \cref{tab:forbidden_cells_summary}.
To obtain the lower bounds, we present various novel constructions, while our upper bounds are proved via discharging arguments.

Further, we study the question, which cell types can be avoided in simple drawings of (essentially) all graphs.
In particular, we fully characterize all connected simple graphs in $\mathcal{G}_\celltype$ for cell types \celltype{} that are not incident to any crossing,  showing that $\mathcal{G}_\celltype$ contains nearly all simple connected graphs.

Finally, we consider the well-studied class of quasiplanar drawings, via its relation to \threezero-free drawings.
For non-homotopic drawings of multigraphs, it is known that the edge densities of these two classes coincide.
We show that despite this fact, the two classes are different.
We further present a construction of simple graphs with $7.5n-28$ edges that admit a non-homotopic quasiplanar drawing, thus halving the gap between the upper and the previously best lower bound for this class. 

\begin{table}[htb]
    \centering
    \begin{tabular}{c|c|c|c|c}
        \toprule
        Forbidden cell& Graph type & Drawing type & Lower Bound & Upper Bound \\
        \midrule
        Any & Simple & No restriction & 
			\customstack{$\bm{{n \choose 2}}$}
			{\cref{con:arbitrary_quadratic_density}}
            & $\binom{n}{2}$\\
        \midrule
        \multirow{3}{*}[-1.6em]{\threezero-cell} & Multi & Non-homotopic &  
            \customstack{$8n-20$}{\cite{kaufmann_density_formula}} &
            \customstack{$8n-20$}{\cite{ackerman_quasiplanar}}\\
        \cmidrule{2-5}
        & Simple & Non-homotopic &  
            \customstack{$\bm{8n-28}$}{\cref{con:simple_nh_threezero_free}}
            &
            \customstack{$8n-20$}{\cite{ackerman_quasiplanar}}\\
        \cmidrule{2-5}
        & - & Simple &  
            \customstack{$7n-30$}{Cons. \ref{con:no_threezero_simple_dense} based on \cite{ackerman_quasiplanar}} &
            \customstack{$7n-20$}{\cite{ackerman_quasiplanar}
            }\\
        \midrule
        \multirow{3}{*}[-1.6em]{\fourzero-cell} & Multi & Non-homotopic &  
            \customstack{$\bm{9n-18}$}{\cref{con:dense_multi_fourzero_free}} &
            $\infty$ \\
        \cmidrule{2-5}
        & Simple & Non-homotopic &  
            \customstack{$\bm{6n-12}$}{\cref{con:dense_fourzero_free}} &
            \multirow{2}{*}[-0.8em]{$\binom{n}{2}$} \\
        \cmidrule{2-4}
        & - & Simple &  
            \customstack{$\bm{6n-12}$}{\cref{con:dense_fourzero_free}} &
             \\
        \midrule
        \multirow{2}{*}[-0.8em]{\fourone-cell} & Simple & Non-homotopic & 
            \customstack{$\bm{\Omega(n^2)}$}{\cref{con:fourone-non-hom}} &
            \multirow{2}{*}[-0.8em]{$\binom{n}{2}$} \\
        \cmidrule{2-4}
        & - & Simple &
            \customstack{$\bm{\Omega(n^{3/2})}$}{\cref{con:fourone_simple_plane}} &
            \\
        \midrule
        \multirow{2}{*}[-0.8em]{\fivezero-cell}  & Multi & Non-homotopic & 
            \customstack{$\bm{6n-12}$}{\cref{con:dense_fivezero_free}} & 
            \customstack{$\bm{6n-12}$}{\cref{thm:nh_fivezero_free}}\\
        \cmidrule{2-5}
        & - & Simple & 
            \customstack{$\bm{6n-12}$}{\cref{con:dense_fivezero_free}} & 
            \customstack{$\bm{6n-12}$}{\cref{thm:nh_fivezero_free}}\\
        \midrule
        Others & - & Simple & 
            \customstack{$\bm{{n \choose 2}}$}{\cref{con:most_cell_types_avoidable,con:no-fivetwo-cell-complete-graphs}} &
            $\binom{n}{2}$\\
        \bottomrule
    \end{tabular}
    \medskip
    \caption{
    Results on edge densities of $n$-vertex graphs admitting a drawing without one fixed type of cell. Upper bounds hold for all $n \geq 4$, while lower bounds are proven for infinitely many values of $n$. Bounds written in bold are shown in this work.
    }
    \vspace{-3ex}
    \label{tab:forbidden_cells_summary}
\end{table}

\newpage
\subparagraph*{Related work.}

Apart from the aforementioned work of Ackerman and Tardos~\cite{ackerman_quasiplanar} and Kaufmann, Klemz, Knorr, Reddy, Schröder, and Ueckerdt~\cite{kaufmann_density_formula} on \threezero-free drawings, we are (as far as we know) the first to consider graph drawings with a forbidden type of cell.
However, some existing drawing styles are characterized by forbidding an infinite number of cells. These include planar and 1-planar drawings as well as the recently introduced $k^+$-real face drawings~\cite{binucciGraphsDrawnVertices2024}. 
Further, some beyond-planar drawing styles are characterized by forbidding a configuration that is somewhat similar to a particular cell type.
For example, in a fan-crossing-free drawing~\cite{cheongNumberEdgesFanCrossing2015} there is no edge crossed by two adjacent edges, which resembles the \fourone-cell.
Secondly, in a quasiplanar drawing, there are no three pairwise crossing edges, which resembles the \threezero-cell.
And in a $(2,2)$-grid-free drawing, there are no two pairs $e_1,e_2,f_1,f_2$ of edges such that every $e_i$ crosses every $f_j$, which resembles the \fourzero-cell.
For a collection of edge density results on these and other beyond-planar graph classes, we refer to \cite[Chapter 4]{didimoSurveyBeyondGraphDrawing2020}.
We remark that in the three cases above, every fan-crossing-free, quasiplanar, and $(2,2)$-grid-free drawing is also a \celltype-free drawing for $\celltype = \fourone$, \threezero, and \fourzero, respectively, while the converse statements are false.
Hence, lower bound constructions carry over to our setting, while in general, \celltype-free drawings might allow for a much higher edge density.

Let us also mention that cell types have been investigated in the context of arrangements of pseudolines.
For simple arrangements of pseudolines, it is known that only \threezero-cells are forced to appear in every sufficiently large arrangement.
In fact, any simple arrangement of $n$ pseudolines in the Euclidean plane or in the projective plane contains at least $n-2$ or $n$, respectively, \threezero-cells~\cite{leviTeilung1926,felsnerTrianglesEuclideanArrangements1999}.
On the other hand, there are various constructions without \fourzero-cells~\cite{harborthTWOCOLORINGSSIMPLEARRANGEMENTS1984,harborthSimpleArrangementsPseudolines1985}, and arbitrarily large simple arrangements consisting only of \threezero-cells and \fourzero-cells~\cite{leanosSimpleEuclideanArrangements2007}.
Further results on the number of cells of different types in pseudoline arrangements can be found in the textbook chapter~\cite{felsner2017pseudoline}, including upper bounds on the number of \threezero-cells and \fourzero-cells in simple pseudoline arrangements from~\cite{MR2858666,grunbaumArrangementsSpreads1972}, and a lower bound on the number of \fourzero-cells and \fivezero-cells from~\cite{roudneffQuadrilateralsPentagonsArrangements1987}.

\subparagraph*{Outline.} 
We start in \cref{sec:prelims} with relevant definitions and some statements that we will use throughout the paper. 
Then, in \cref{sec:complete-graphs}, we show that most cell types can be avoided in simple drawings of $K_n$ and that all cell types can be avoided in arbitrary drawings of $K_n$.
Constituting the main body of this work, we address the remaining cell types in \crefrange{sec:fivezero}{sec:fourone}, with quasiplanarity discussed jointly with \threezero-cells in \cref{sec:threezero}.
Finally, in \cref{sec:all-graphs}, we consider the graph classes $\mathcal{G}_\celltype$ induced by forbidding cell types \celltype{} whose boundary does not contain any crossings in more detail. We conclude this work with some open problems in \cref{sec:conclusion}.

\section{Preliminaries: Drawings, Cells, and Cell Types} \label{sec:prelims}

Throughout this paper, we consider finite graphs with no loops but possibly parallel edges.
To make the (potential) presence of multiedges more explicit, we sometimes use the term \emph{multigraph}.
Similarly, when a graph has no parallel edges, it is called a \emph{simple} graph.
A drawing $\Gamma$ of a (potentially disconnected) multigraph $G=(V,E)$ maps vertices to pairwise distinct points and edges to curves connecting the corresponding vertices.
For convenience, we mostly consider drawings on the sphere $\mathbb{S}^2$ to avoid a special treatment of the unbounded cell.
As customary, we require that no edge passes through a vertex, any two edges have only finitely many points in common, each being a common endpoint or a proper crossing, and that no three edges cross in the same point.
A drawing $\Gamma$ is \emph{connected} if the image of the map $\Gamma$ is connected in $\mathbb{S}^2$.

A drawing is \emph{simple} if any two edges have at most one point in common, which is either a common endpoint or a crossing.
As this rules out parallel edges, only simple graphs admit simple drawings.
A \emph{lens} in a drawing is a region of $\mathbb{S}^2$ bounded by exactly two parts of edges.
These could be two parallel edges, two edges crossing twice, or two crossing adjacent edges.
We call a lens \emph{empty} if its interior contains neither a crossing nor a vertex.
A drawing is \emph{non-homotopic} if it does not contain any empty lens.

Consider any fixed connected drawing $\Gamma$ of some multigraph $G$.
The crossings split each edge into a number of \emph{edge segments}.
An \emph{outer edge segment} is incident to a vertex, while an \emph{inner edge segment} starts and ends with a crossing.
The \emph{cells} of $\Gamma$ are the connected components of $\mathbb{S}^2$ after the removal of all vertices and edges.
For a cell $c$, we denote by $v(c)$ the number of vertices, by $e(c)$ the number of edge segments, and by $e_{\rm in}(c)$ the number of inner edge segments on its boundary.
Here, vertices and edge segments may be counted multiple times; as often as they appear when walking along the boundary of $c$.
The \emph{size} of $c$ is the sum of the former two quantities: $\size{c} \coloneqq e(c) + v(c)$.
We denote by $\calC$ the set of all cells, and set $\calC_k = \{c \in \calC \colon \size{c} = k\}$ and $\calC_{\geq k} = \{c \in \calC \colon \size{c} \geq k\}$.

Since $\Gamma$ is assumed to be a connected drawing of $G$, the boundary of every cell is connected.
The \emph{type} of a cell $c$ is the cyclic sequence of incidences with edge segments, crossings, and vertices along the boundary of $c$.
We emphasize that a cell type \celltype{} only determines the form of incidence (edge segment, vertex, or crossing) and not the participating edge, vertex, or crossing of $G$.
In particular, different incidences might be with the same edge segment, crossing, or vertex.
We use pictograms with the size of the cell inscribed (such as \threezero{} or \fiveone{}) to concisely denote cell types of small size.
For convenience, 
we let these pictograms also denote the number of cells of that type in a given drawing.
If no cell in $\Gamma$ has type \celltype, then $\Gamma$ is called \emph{\celltype-free}.

Let us conclude the preliminaries with an observation by Kaufmann, Klemz, Knorr, Reddy, Schröder, and Ueckerdt~\cite{kaufmann_density_formula}, and two lemmas, which we will need in the course of this paper.

\begin{observation}[\hspace{1sp}{\cite[Observation 2.2]{kaufmann_density_formula}}]
    \label{obs:cell_types_non-hom}
    Let $\Gamma$ be any non-homotopic connected drawing of some multigraph $G$ with at least three vertices. Then
    \begin{itemize}
        \item $\calC_1 = \calC_2 = \emptyset$,
        \item $\calC_3$ is the set of all \threezero-cells,
        \item $\calC_4$ is the set of all \fourzero-cells and all \fourone-cells, and
        \item $\calC_5$ is the set of all \fivezero-cells, \fiveone-cells, and \fivetwo-cells.
    \end{itemize}
\end{observation}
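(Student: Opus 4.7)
The plan is to convert the observation into an arithmetic statement via an elementary identity for boundary walks, and then eliminate the few spurious candidates via a short case analysis.

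First, in any connected drawing the boundary walk of a cell $c$ strictly alternates between edge segments and incidences with vertices or crossings, which gives $e(c) = v(c) + x(c)$, where $x(c)$ counts crossing incidences on the boundary with multiplicity. Substituting into $\size{c} = e(c) + v(c)$ yields the identity $\size{c} = 2 v(c) + x(c)$. For each $k \in \{1, \dots, 5\}$, the non-negative integer solutions $(v, x)$ of $2v + x = k$ form a short list whose members are exactly the stated cell types together with five spurious candidates: $(0,1)$ at $k = 1$, $(0,2)$ and $(1,0)$ at $k = 2$, $(1,1)$ at $k = 3$, and $(2,0)$ at $k = 4$. At $k = 5$ no spurious candidate arises, so the statement about $\calC_5$ is immediate.

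The candidates $(0,1)$ and $(1,0)$ are ruled out directly by the drawing conventions: they correspond respectively to an edge self-intersecting at a crossing and to a loop at a vertex. For each of the remaining three candidates I would exhibit an empty lens on the boundary. A $(0,2)$-cell is bounded by two edge segments joining the same two crossings, hence two edges that cross twice; a $(1,1)$-cell is bounded by two edge segments sharing a vertex $v$ and a crossing, hence two distinct edges incident to $v$ that cross once; and the generic $(2,0)$-cell at $k = 4$ is bounded by two edge segments joining the same two vertices, hence two parallel edges. In each case the cell interior is an empty lens, contradicting non-homotopy.

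The one subtle case is the degenerate variant of $(2,0)$ in which the two boundary segments are two traversals of a single edge (rather than two parallel edges), so the cell's boundary walk wraps around a pendant edge and non-homotopy alone does not exclude it. I would rule this out using connectivity: since the cell is a disk on $\mathbb{S}^2$ and the drawing is connected, any further vertex or edge of $G$ would contribute additional incidences to this boundary walk, so a walk of length~$4$ of this shape forces the drawing to consist solely of the pendant, i.e., $n = 2$, contradicting $n \geq 3$. This connectivity argument is the only step I expect to require any care; with it in place, the observation follows.
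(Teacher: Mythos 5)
Your proof is correct. The paper does not prove this observation itself (it is imported from \cite{kaufmann_density_formula}), and your argument --- using the alternation identity $e(c) = v(c) + x(c)$ to reduce to enumerating solutions of $2v(c) + x(c) = \size{c}$, then killing the spurious candidates via the absence of loops and self-crossings, non-homotopicity, and finally connectedness with $n \geq 3$ for the degenerate pendant-edge variant of the $(2,0)$ case --- is exactly the standard intended argument, with the one genuinely delicate case correctly identified and handled.
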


The next two lemmata will help prove upper bounds for \fivezero-free drawings in \cref{sec:fivezero} and \threezero-free drawings in \cref{sec:threezero}. 
\cref{lem:identity_vertices_cell_sizes} below relates the number of vertices in a graph to the sizes of cells in a drawing of the graph.
The statement already appears implicitly in various edge density proofs via discharging (e.g., \cite{ackermanMaximumNumberEdges2009,arikushiGraphsThatAdmit2012,ackermanTopologicalGraphsMost2019}) and as a key ingredient in the proof for the Density Formula~\cite[Lemma 3.1]{kaufmann_density_formula}. 
We thus state it without proof\footnote{As one of many ways to prove it, for example, subtracting the Density Formula with $t=0$ from the Density Formula with $t=1$ gives the result.}.

\begin{lemma}
    \label{lem:identity_vertices_cell_sizes}
    For any connected drawing $\Gamma$ of a multigraph $G$ with at least one edge,
    \[
        |V(G)|-2 = \sum_{c \in \calC} \left(\frac{1}{4}\size{c} - 1\right).
    \]
\end{lemma}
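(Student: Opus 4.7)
My plan is to derive the identity by combining Euler's formula applied to the planarization of $\Gamma$ with a double-counting of the total cell size $\sum_c \size{c}$.

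First, I would pass to the planarization $\Gamma^\times$, obtained by turning every crossing into a degree-$4$ vertex. Writing $V = |V(G)|$, $E = |E(G)|$, letting $X$ denote the number of crossings and $F = |\calC|$ the number of cells, the planarization $\Gamma^\times$ is a connected plane multigraph on $\mathbb{S}^2$ with $V + X$ vertices, $E + 2X$ edges (each of the $X$ crossings subdivides its two edges into segments, so the number of edge segments in total is $E + 2X$), and $F$ faces. Because $\Gamma$ is connected, each face of $\Gamma^\times$ is an open disk, so each cell has a well-defined closed boundary walk on which $e(c)$ and $v(c)$ can be read off. Euler's formula on the sphere then gives
\[
(V + X) - (E + 2X) + F = 2, \qquad \text{i.e.,} \qquad F = E + X - V + 2.
\]

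Next, I would compute $\sum_{c \in \calC} \size{c} = \sum_{c} e(c) + \sum_{c} v(c)$ by summing contributions along all boundary walks. Each edge segment is traversed exactly twice when we sum over all boundary walks (it can also bound the same cell twice, e.g.\ when it is a bridge of $\Gamma^\times$, but $e(c)$ counts such occurrences with multiplicity by definition), giving
\[
\sum_{c \in \calC} e(c) \;=\; 2(E + 2X).
\]
Every vertex $v \in V(G)$ contributes once per angular sector around it, hence $\deg_G(v)$ times in total, so
\[
\sum_{c \in \calC} v(c) \;=\; \sum_{v \in V(G)} \deg_G(v) \;=\; 2E.
\]
Adding these yields $\sum_{c \in \calC} \size{c} = 4(E + X)$.

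Finally, substituting the two results:
\[
\sum_{c \in \calC} \left(\tfrac{1}{4}\size{c} - 1\right) \;=\; (E + X) - F \;=\; (E + X) - (E + X - V + 2) \;=\; V - 2.
\]
There is no real obstacle here; the only care I would take is to ensure that contributions to $e(c)$ and $v(c)$ are always counted with multiplicity along the boundary walk (so that the equalities $\sum_c e(c) = 2(E+2X)$ and $\sum_c v(c) = 2E$ are valid even when boundary walks revisit edges or vertices), and to invoke the fact that a connected graph on $\mathbb{S}^2$ has open-disk faces so that each cell has a single well-defined boundary walk.
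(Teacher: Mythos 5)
Your proof is correct. Note, though, that the paper deliberately states this lemma \emph{without} proof: it only remarks in a footnote that the identity follows by subtracting the Density Formula of Kaufmann et al.\ with $t=0$ from the one with $t=1$. So your route is genuinely different in presentation: you give a self-contained first-principles argument via the planarization, Euler's formula on $\mathbb{S}^2$, and two double counts ($\sum_c e(c) = 2(E+2X)$ from edge segments and $\sum_c v(c) = 2E$ from angular sectors at vertices), arriving at $\sum_c \size{c} = 4(E+X)$ and hence $\sum_c\left(\frac{1}{4}\size{c}-1\right) = (E+X) - F = |V(G)|-2$. This is essentially the same computation that underlies the proof of the Density Formula itself, so the two approaches are morally equivalent; yours buys independence from the external reference at the cost of a little bookkeeping, while the paper's buys brevity by black-boxing that bookkeeping into a cited result. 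Your attention to counting incidences with multiplicity (bridges appearing twice on a boundary walk, a vertex appearing $\deg_G(v)$ times) and to the hypotheses (connectivity so that Euler's formula applies and every face is a disk; at least one edge so that every vertex has positive degree and the angular-sector count is valid) is exactly the care needed, and I see no gap.
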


The next lemma gives a bound on the number of \threezero-cells and \fourone-cells in terms of the number of inner edge segments in large cells of non-homotopic drawings. 
Similar statements have been implicitly proven in prior work \cite{ackerman_quasiplanar,ackermanTopologicalGraphsMost2019}. 

\begin{lemma}
\label{lem:corridor_proof}
    In any connected non-homotopic drawing of a multigraph with at least three vertices, it holds that 
    \[
        3 \cdot \threezero + \fourone \leq \sum_{c \in \calC_{\geq5}} e_{\rm in}(c).
    \]
\end{lemma}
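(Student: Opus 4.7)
The plan is to establish the inequality via a discharging argument that exploits the non-homotopic hypothesis. I start by placing $1$ unit of charge on each inner edge segment that lies on the boundary of a \threezero- or a \fourone-cell: since a \threezero-cell has exactly $3$ inner boundary segments and a \fourone-cell has exactly $1$, the total initial charge equals $3 \cdot \threezero + \fourone$. The aim is to redistribute these units across inner edge segments so that every unit eventually sits on an inner segment of the boundary of some cell in $\calC_{\geq 5}$, with no such (cell,segment) incidence carrying more than $1$ unit; summing then gives the desired inequality.

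The main structural input is a series of empty-lens arguments asserting that, in a non-homotopic drawing: (i) two \threezero-cells cannot share an inner edge segment; (ii) a \threezero- and a \fourone-cell cannot share the \fourone's unique inner segment; (iii) two \fourone-cells cannot share their inner segments; and (iv) within every \fourzero-cell $Q$, two opposite segments of $Q$'s boundary cannot both be shared with a \threezero- or a \fourone-cell. In each case the forbidden configuration forces a pair of edges (either two edges crossing twice, or two adjacent edges sharing a vertex and crossing once) to enclose a lens whose interior contains no crossing and no vertex, contradicting non-homotopicity.

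Using (i)--(iii), I send the unit on each \threezero- or \fourone-inner boundary segment across that segment into the opposite cell, which by the above must be either a \fourzero- or a big cell. Units landing at big cells are \emph{delivered}. A unit arriving at a \fourzero-cell $Q$ on a segment $s$ is forwarded through the segment of $Q$ opposite to $s$ in the boundary $4$-cycle of $Q$; by (iv), this opposite segment also borders a big- or a \fourzero-cell. Iterating, each unit traces a sequence of \fourzero-cells in which consecutive cells share a boundary segment. A short local argument shows that the ``top'' and ``bottom'' sides of the \fourzero-cells in such a sequence are carried by a common pair of edges $e_T, e_B$ of $G$ throughout the sequence (since at a shared corner between two consecutive \fourzero-cells the two crossing edges coincide). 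Because $e_T$ and $e_B$ are simple curves between fixed vertex endpoints of $G$, the sequence has finite length and must terminate.

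The subtlest step is ruling out the possibility that a unit terminates inside a small cell. A refined local empty-lens argument at the endpoint of the sequence shows that the terminal cell lies in $\calC_{\geq 5}$, by examining the only two ways the corridor of \fourzero-cells can end (namely $e_T$ or $e_B$ reaching a vertex endpoint, or $e_T$ and $e_B$ crossing) and forcing an empty lens in every remaining degenerate termination at a \threezero- or \fourone-cell. Once this is in place, every unit of charge is delivered to a big cell, and because only one forwarded unit can pass across any given inner edge segment, each inner segment of a big cell receives at most $1$ unit. Summing the delivered charge gives $3 \cdot \threezero + \fourone \leq \sum_{c \in \calC_{\geq 5}} e_{in}(c)$, as required.
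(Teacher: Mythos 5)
Your proposal is correct and takes essentially the same route as the paper: there, the inequality is proved by defining an injective map from the inner boundary segments of \threezero- and \fourone-cells to inner-segment incidences of cells in $\calC_{\geq 5}$, obtained by tracing a curve through the corridor of \fourzero-cells bounded by the two edges crossing the starting segment, with empty-lens arguments ruling out termination at another small cell — exactly your charge-forwarding scheme in different packaging.
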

\begin{proof}
    According to \cite[Lemma 3.6]{kaufmann_density_formula}, for the set $S_{\rm in}$ of all inner edge segments we have
    \begin{equation}
        |S_{\rm in}| \geq 3 \cdot \threezero + \fourone + 2 \cdot \fourzero.\label{eq:lemma3.6}
    \end{equation}
    On the other hand, every inner edge segment appears twice on the boundary of some cell.
    As the only cells of size at most four are \threezero-, \fourone-, and \fourzero-cells (cf.~\cref{obs:cell_types_non-hom}), it follows
    \[
        |S_{\rm in}| = \frac12 \cdot \sum_{c \in \calC} e_{\rm in}(c) = \frac12 \cdot \big(3 \cdot \threezero + \fourone + 4 \cdot \fourzero + \sum_{c \in \calC_{\geq5}} e_{\rm in}(c)\big),
    \]
    which together with \eqref{eq:lemma3.6} gives the claim.
\end{proof}

\section{Drawings of Complete Graphs}
\label{sec:complete-graphs}

First, we show that for most cell types \celltype, complete graphs admit simple \celltype-free drawings.

\begin{construction}
    \label{con:most_cell_types_avoidable}
    For every $n\geq 4$, the complete graph $K_n$ admits a simple \fiveone-free drawing in which each cell $c$ fulfills $\size{c} \leq 5$ or $\size{c} = 2n$.
\end{construction}
\begin{proof}
    Consider the simple drawing of $K_n$ depicted in \cref{fig:three_bend_Kn}, in which the vertices lie evenly spaced on a horizontal line and each edge follows the shape of a $\wedge$ with a right-angled peak.
    For $n \geq 4$, all cells in this drawing except the outer cell are \threezero-, \fourzero-, \fourone-, \fivezero-, or \fivetwo-cells.
    Clearly, the outer cell $c$ has size $\size{c} = 2n$.
\end{proof}

\begin{figure}[htb]
    \centering
    \includegraphics{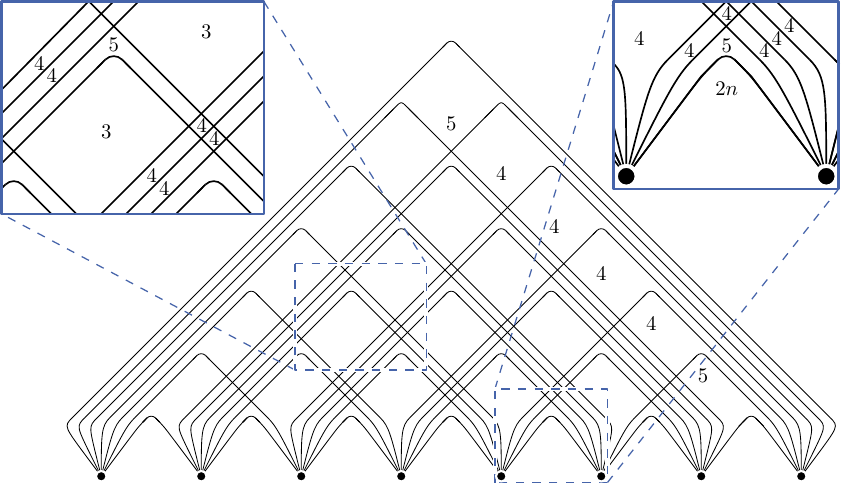}
    
    \caption{A simple drawing of $K_n$ (here $n=8$) containing only cells of size at most~$5$, and one cell of size $2n$. The numbers in the figure indicate the sizes of the different kinds of cells.}
    \label{fig:three_bend_Kn}
\end{figure}

\cref{con:most_cell_types_avoidable} shows that for any cell type $\celltype \notin \{\threezero, \fourzero, \fourone, \fivezero, \fivetwo\}$, every large enough complete graph admits a simple $\celltype$-free drawing.
In particular, the edge density of simple $\celltype$-free drawings is $\binom{n}{2}$ whenever $\celltype$ has size at least~$6$, or $\celltype = \fiveone$.
\
Next, we show that \fivetwo{}-cells can also be avoided in simple drawings of $K_n$. 
Note that any drawing without uncrossed edges will do, see for example \cite{harborthEdgesWithoutCrossings1974}.
For completeness, we present our own construction.

\begin{construction}
    \label{con:no-fivetwo-cell-complete-graphs}
    For every $n \geq 8$, the complete graph $K_n$ has a simple \fivetwo-free drawing.
\end{construction}
\begin{proof}
    Let $n \geq 8$ be even and start with a straight-line drawing of $K_n$ with vertices as equidistant points on a circle.
    By the \emph{length} of an edge we refer to the distance of its endpoints in circular order along the circle.  
    Redraw the edges of length~$3$ as depicted in blue in \cref{fig:almost_convex_Kn}, so as to cross the edges of length~$1$ (on the convex hull).
    Note that this change retains simplicity.
    Further, redraw every other edge of length~$2$ to be uncrossed in the outer cell (red edges in \cref{fig:almost_convex_Kn}).
    Finally, redraw all edges of length~$n/2$ to be pairwise crossing in the outer cell (green edges in \cref{fig:almost_convex_Kn}).
    This again retains simplicity.
    For $n$ odd, take the construction for $n'=n-1$, add the last vertex in the central cell, and connect it to all other vertices with straight-line segments.
    The resulting drawing $\Gamma$ is simple. Further, since every edge is crossed at least once, $\Gamma$ is also \fivetwo-free .
\end{proof}

\begin{figure}[htb]
    \centering
        \includegraphics{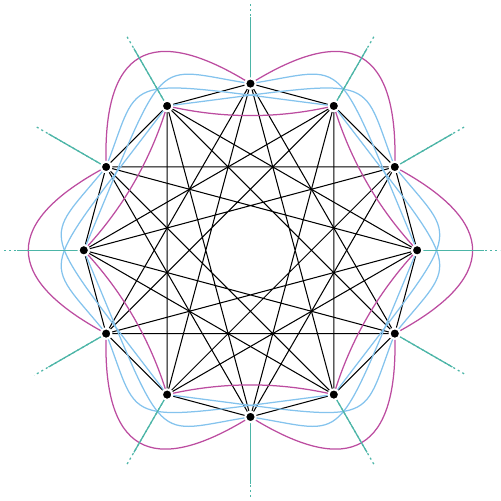}
        \caption{
			A simple \fivetwo-free drawing of $K_{12}$. (A small perturbation eliminates the multi-crossings.) Edges of length three are drawn in blue and those of length two in red. Edges of length $n/2$ are depicted in green and only partially drawn. 
			}
    \label{fig:almost_convex_Kn}
\end{figure}

With \cref{con:most_cell_types_avoidable,con:no-fivetwo-cell-complete-graphs} at hand, only the cell types \threezero, \fourone, \fourzero, and \fivezero{} remain, if we restrict to simple drawings of complete graphs.
Before turning to these types, let us close the section by dropping the requirement that the drawing is simple.
In fact, without any restriction, every cell type can be avoided in drawings of complete graphs.

\begin{construction}
    \label{con:arbitrary_quadratic_density}
    For any cell type \celltype and any $n \geq 8$, the complete graph $K_n$ admits a \celltype-free drawing.
\end{construction}
\begin{proof}
    Let $n \geq 8$ be arbitrary. 
    For $\celltype \notin \{\threezero, \fourzero, \fourone, \fivezero\}$, the claim follows from \cref{con:most_cell_types_avoidable,con:no-fivetwo-cell-complete-graphs}.
    Every remaining cell type \celltype{} has size $3$, $4$, or $5$ and at least one crossing on its boundary.
    We now construct a drawing of $K_n$ in which every cell with an incident crossing has size $2$ or at least $6$, which completes the proof.
    To do so, start with any drawing of $K_n$.
    
    Locally replace each crossing with the pattern in \cref{fig:crossing-replacement}.
    In the resulting drawing, each cell incident to a crossing is part of such a pattern and therefore of size $2$ or at least~$6$. 
\end{proof}

\begin{figure}[htb]
    \centering
    \includegraphics{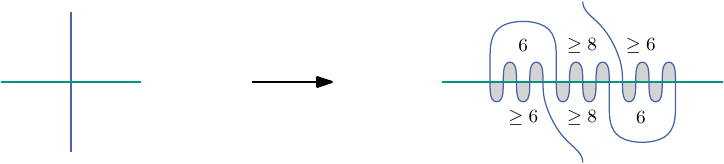}
    \caption{
        A crossing can be redrawn to bound only cells of size 2 (shaded gray) or at least~$6$.
    }
    \label{fig:crossing-replacement}
\end{figure}

\section{Drawings without \texorpdfstring{\fourzero}{4\_0} or \texorpdfstring{\fivezero}{5\_0}-Cells}
\label{sec:fivezero}
\label{sec:fourzero}

In this section, we consider the \fourzero- and \fivezero-cell.
We start by showing that non-homotopic \fivezero-free drawings have only linear edge density.
Note that by \cref{con:arbitrary_quadratic_density}, the restriction to non-homotopic drawings is essential.
This result can also be derived from the proof of a statement on straight-line drawings with crossing angle restrictions~\cite[Theorem 4.2]{dujmovicLargeAngleCrossing2011}.

\begin{theorem}
    \label{thm:nh_fivezero_free}
    For $n \geq 3$, any connected $n$-vertex multigraph that admits a non-homotopic \fivezero-free drawing has at most $6n-12$ edges.
\end{theorem}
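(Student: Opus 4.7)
The plan is a discharging argument. I would assign each cell $c$ the initial charge $\mathrm{ch}(c) \coloneqq 3\size{c} - 12 - v(c)$. Applying Euler's formula to the planarization together with the standard double-counting identities ($\sum_c v(c) = 2|E|$ and $\sum_c e(c) = 2|E| + 4X$, where $X$ is the total number of crossings), a short calculation yields $\sum_{c \in \calC} \mathrm{ch}(c) = 12(n-2) - 2|E|$. Since discharging preserves this sum, the theorem reduces to showing $\sum_{c \in \calC} \mathrm{ch}(c) \geq 0$ after a suitable redistribution.

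Writing $\mathrm{cr}(c) = \size{c} - 2v(c)$ for the number of crossings on the boundary of $c$, the initial charge can be recast as $\mathrm{ch}(c) = \tfrac{1}{2}(5\size{c} + \mathrm{cr}(c)) - 12$. Combined with \cref{obs:cell_types_non-hom}, this gives $\mathrm{ch}(\threezero) = -3$, $\mathrm{ch}(\fourzero) = 0$, and $\mathrm{ch}(\fourone) = -1$, while every size-$5$ cell other than a \fivezero-cell has charge at least $1$ and every cell of size at least $6$ has charge at least $3$. Thus, under the \fivezero-free assumption, only \threezero-cells and \fourone-cells are negatively charged.

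The discharging rule follows the injection $g\colon \calS^{in}_\Delta \to \calS^{in}_{\geq 5}$ from the proof of \cref{lem:corridor_proof}: for each $s \in \calS^{in}_\Delta$ in a cell $c \in \calC_\Delta$, with $g(s)$ lying on the boundary of the cell $c' \in \calC_{\geq 5}$ picked out by the lemma, transfer one unit of charge from $c'$ to $c$. Each \threezero-cell has three inner segments and therefore receives exactly $3$ units, ending at charge $0$; each \fourone-cell has exactly one inner segment and receives one unit, also ending at $0$; \fourzero-cells are untouched.

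It remains to verify that every $c' \in \calC_{\geq 5}$ keeps non-negative charge after sending at most $e_{in}(c')$ units. Double-counting the adjacencies in the cyclic boundary sequence of $c'$ yields the identity $e_{in}(c') = \alpha(c') + \mathrm{cr}(c') - v(c')$, where $\alpha(c')$ counts vertex-vertex adjacencies. Substituting gives $\mathrm{ch}(c') - e_{in}(c') = 2\size{c'} + 2v(c') - 12 - \alpha(c')$, and using $\alpha(c') \leq v(c')$ in general together with the sharper $\alpha(c') = 0$ whenever $v(c') \leq 1$, this quantity is non-negative for every $c' \in \calC_{\geq 5}$ except \fivezero. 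I expect this final inequality to be the main obstacle: it is tight (equal to $0$) for \fiveone-cells, whose unique boundary sequence has $\alpha = 0$ and $e_{in} = 2$, leaving no slack for this extremal case.
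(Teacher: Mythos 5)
Your proof is correct and follows essentially the same route as the paper: the initial charge $3\size{c}-12-v(c)$ equals the paper's quantity $3e(c)+2v(c)-12$, the discharging along the injection of \cref{lem:corridor_proof} is exactly how the paper cancels the negative charge of \threezero- and \fourone-cells, and the final double counting is identical. The only (cosmetic) difference is that you verify $\mathrm{ch}(c')\geq e_{in}(c')$ for $c'\in\calC_{\geq 5}$ via the identity $e_{in}=\alpha+\mathrm{cr}-v$ instead of the paper's direct case check, correctly identifying the \fiveone-cell as the tight case.
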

\begin{proof}
    Fix a \fivezero-free drawing of a connected $n$-vertex multigraph with $n \geq 3$.
    First, let us compute for each cell $c \in \calC$ the quantity $3e(c) + 2v(c) - 12$, and in case $c \in \calC_{\geq 5}$ compare it with $e_{\rm in}(c)$.
    \begin{align*}
        &c\text{ is a \threezero-cell}:& &3e(c) + 2v(c) - 12 = -3\\
        &c\text{ is a \fourone-cell}:& &3e(c) + 2v(c) - 12 = -1\\
        &c\text{ is a \fourzero-cell}:& &3e(c) + 2v(c) - 12 = 0\\
        &c\text{ is a \fiveone-cell}:& &3e(c) + 2v(c) - 12 = 2 = e_{\rm in}(c)\\
        &c\text{ is a \fivetwo-cell}:& &3e(c) + 2v(c) - 12 = 1 \geq 0 = e_{\rm in}(c)\\
        &\size{c} \geq 6:& &3e(c) + 2v(c) - 12 = e(c) + 2\size{c} - 12 \geq e(c) \geq e_{\rm in}(c)
    \end{align*}
    Note that the above case distinction is complete as $c$ is not a \fivezero-cell.
    
    Summing over all cells, this gives
    \begin{equation}
        \sum_{c \in \calC} (3e(c)+2v(c)-12) \geq -3 \cdot \threezero - \fourone + \sum_{c \in \calC_{\geq 5}} e_{\rm in}(c) \geq 0,\label{eq:fivezero_inequality}
    \end{equation}
    where the last inequality is an application of \cref{lem:corridor_proof}.

    Finally, we obtain the desired result by double-counting all vertex-cell incidences as
    \begin{equation*}
        2|E| = \sum_{c \in \calC}v(c) \overset{\eqref{eq:fivezero_inequality}}{\leq} \sum_{c \in \calC}\left(3e(c) + 3v(c) - 12 \right) = 3\sum_{c \in \calC} \left(\size{c}-4\right) = 12(n-2),
    \end{equation*}
    where the last equality is an application of \cref{lem:identity_vertices_cell_sizes}.
\end{proof}

\cref{thm:nh_fivezero_free} gives an upper bound of $6n-12$ on the edge density of \fivezero-free non-homotopic drawings of multigraphs.
As the next construction shows, this bound is tight, even for simple drawings.
This construction also provides our best lower bound for the edge density of simple $\fourzero$-free drawings.

\begin{construction}
    \label{con:dense_fivezero_free}
    \label{con:dense_fourzero_free}
    For infinitely many values of $n$, there is a simple $n$-vertex graph with $6n-12$ edges that admits a simple $\fourzero$-free and $\fivezero$-free drawing.
\end{construction}
\begin{proof}
    Start with a plane drawing $\Gamma_0$ of a $5$-connected triangulation $G_0$ on $n$ vertices.
    For an edge $e = uv$ in $G_0$, consider the two triangular faces $f,f'$ in $\Gamma_0$ incident to $e$, and let $w,w'$ be the third vertex (different from $u,v$) in $f,f'$, respectively.
    We insert a new edge between $w$ and $w'$ in the drawing by only traversing the two incident faces $f,f'$ of $e$.
    As $G_0$ is $5$-connected, $ww'$ is not already an edge in $G_0$, as otherwise there is a separating triangle in $G_0$ with $w,w'$ and one of $u,v$.
    Now, we do the same for all edges in $G_0$.
    Again, as $G_0$ is $5$-connected, for every edge $\tilde{e} \neq e$ in $G_0$ we obtain a pair different from $w,w'$, as otherwise there is a separating $4$-cycle in $G_0$ with $w,w'$ and one endpoint of each of $e,\tilde{e}$.

    \begin{figure}[htb]
        \centering
        \begin{subfigure}{.48\textwidth}
        \centering
        \includegraphics[align=c]{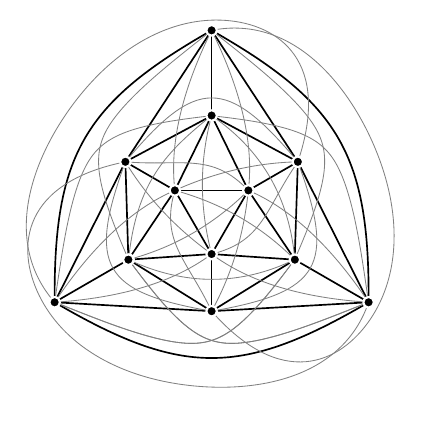}
        \end{subfigure}
        \hfill
        \begin{subfigure}{.48\textwidth}
            \centering
        \includegraphics[align=c]{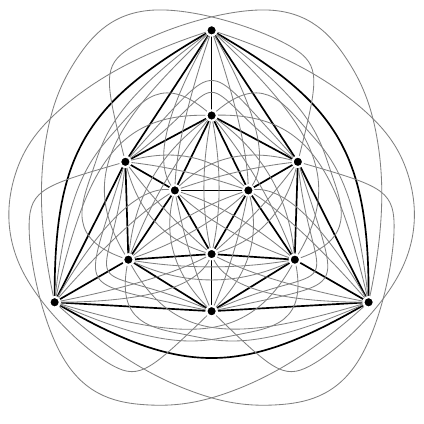}
        \end{subfigure}
        \caption{
            Left: A simple \fourzero-free and \fivezero-free drawing with $6n-12$ edges.
            Right: A non-homotopic \fourzero-free drawing of a multigraph with $9n-18$ edges. On both sides, $n=12$.
        }
        \label{fig:dense_fivezero_free}
        \label{fig:dense_fourzero_free}
        
    \end{figure}

    Call the resulting graph $G$ and the resulting drawing $\Gamma$.
    By the above reasoning, the graph $G$ is simple (has no parallel edges).
    Moreover, observe that the drawing $\Gamma$ is simple (any two edges have at most one point in common).
    See \cref{fig:dense_fivezero_free} (Left) for an illustration.

    The only cells in $\Gamma$ that are not incident to any vertex are \threezero-cells.
    Hence, $\Gamma$ is \fourzero-free and \fivezero-free.
    Finally, since $G_0$ is a plane triangulation, we have $|E(G_0)| = 3n - 6$ and thus $|E(G)| = 2 |E(G_0)| = 6n-12$.
\end{proof}

A slight variation of this construction yields a better bound for non-homotopic $\fourzero$-free drawings.

\begin{construction}
    \label{con:dense_multi_fourzero_free}
    For infinitely many values of $n$, there is an $n$-vertex multigraph with $9n-18$ edges that admits a non-homotopic \fourzero-free drawing.
\end{construction}
\begin{proof}
    As in \cref{con:dense_fivezero_free}, we start with any $5$-connected triangulation $G_0$ on $n$ vertices.
    For each edge $e = uv$ in $G_0$, we again consider the unique pair $w,w'$ of common neighbors of $u$ and $v$.
    But this time, we add two parallel edges between $w$ and $w'$, again drawing these new edges so that they only cross the edge $e$ of $G_0$.
    For each triangle $uvw$ of $G_0$, this yields two edges per vertex that emanate from the vertices into the triangle.
    
    Additionally, we draw the new edges in a way that in every triangular face $uvw$ of $G_0$, the two parallel edges from each vertex enclose exactly two crossings of the four other edges that emanate into that face.
    This way, $uvw$ is split into one cell of size $6$ with no incident vertex and six \threezero-cells in the interior of $uvw$, as well as one \fivezero-cell at each edge of $uvw$ and three cells at each vertex of $uvw$ (one \fiveone-cell and two \fourone-cells); see \cref{fig:dense_fourzero_free} (right).

    Thus, the obtained drawing is \fourzero-free.
    Non-homotopicity is apparent since only the pairs of parallel edges form lenses, which are non-empty.
    Finally, the drawing contains $9n-12$ edges, since for each of the $3n-6$ edges of $G_0$, two further edges are added.
\end{proof}

By \cref{con:dense_multi_fourzero_free}, the edge density of \fourzero-free non-homotopic multigraphs is at least $9n-18$.
For simple \fourzero-free graphs, the edge density is at least $6n-12$ by \cref{con:dense_fourzero_free}.
However, we have no non-trivial upper bound; see also \cref{prob:fourzero_density} in \cref{sec:conclusion}.

\section{Quasiplanar and \texorpdfstring{\threezero}{3\_0}-Free Drawings}
\label{sec:threezero}

As mentioned already, \threezero-free drawings are closely related to \emph{quasiplanar} drawings, that is, drawings in which no three edges cross pairwise.
As self-intersection of edges is forbidden, any quasiplanar drawing is \threezero-free but there exist \threezero-free drawings that are not quasiplanar.

Several results from the existing literature contain upper and lower bounds on the edge density of \threezero-free and quasiplanar drawings.
See \cref{tab:quasiplanar_summary} for an overview.
In fact, the edge density for both drawing styles is known to be $8n-20$ in the case of non-homotopic drawings of multigraphs.
However, there are still some gaps in the case of non-homotopic drawings of simple graphs, and the case of simple drawings of (simple) graphs.
In this section, we provide several improvements.

\setlength{\tabcolsep}{10pt}
\begin{table}[htb]
    \centering
    \begin{tabular}{c|c|cc|cc}
        \toprule
        & Drawing Type & \multicolumn{2}{c|}{Lower Bounds} & \multicolumn{2}{c}{Upper Bounds} \\
        \midrule
        \parbox[t]{4mm}{\multirow{6}{*}{\rotatebox[origin=c]{90}{\ quasiplanar}}} & \customstack{Non-homotopic}{multigraph} &  
            \multicolumn{2}{c|}{\customstack{$8n-20$}{\cite[Thm. 6.7]{kaufmann_density_formula}}} & \multicolumn{2}{c}{\customstack{$8n-20$}{\cite[Thm. 3]{ackerman_quasiplanar}}}\\
        \cmidrule{2-6}
            & \customstack{Non-homotopic}{simple graph} &  
            \customstack{$7n-28$}{\cite[Thm. 6.7]{kaufmann_density_formula} (implicit)} & \customstack{$7.5n-28$}{\cref{con:simple_nh_quasiplanar}} &
            \multicolumn{2}{c}{\customstack{$8n-20$}{\cite[Thm. 1]{ackerman_quasiplanar}}}\\
        \cmidrule{2-6}
            & Simple drawing & 
            \multicolumn{2}{c|}{\customstack{$6.5n-20$}{\cite[Thm. 6.11]{kaufmann_density_formula}}} &
            \multicolumn{2}{c}{\customstack{$6.5n-20$}{\cite[Thm. 5]{ackerman_quasiplanar}}}\\
        \midrule
        \parbox[t]{4mm}{\multirow{6}{*}{\rotatebox[origin=c]{90}{\threezero-free}}} & \customstack{Non-homotopic}{multigraph} &  
            \multicolumn{2}{c|}{\customstack{$8n-20$}{\cite[Thm. 6.7]{kaufmann_density_formula}}} &
            \multicolumn{2}{c}{\customstack{$8n-20$}{\cite[Thm. 3]{ackerman_quasiplanar}}}\\
        \cmidrule{2-6}
            & \customstack{Non-homotopic}{simple graph} &  
            \customstack{$7.5n-\mathcal{O}(1)$}{\cite[Thm. 4]{ackerman_quasiplanar} (implicit)} & \customstack{$8n-28$}{\cref{con:simple_nh_threezero_free}} &
            \multicolumn{2}{c}{\customstack{$8n-20$}{\cite[Thm. 3]{ackerman_quasiplanar}}}\\
        \cmidrule{2-6}
            & Simple drawing & 
            \multicolumn{2}{c|}{\customstack{$7n-30$}{\cref{con:no_threezero_simple_dense} based on \cite[Thm. 4]{ackerman_quasiplanar}}} &
            \multicolumn{2}{c}{\customstack{$7n-20$}{\cite[Thm. 6]{ackerman_quasiplanar}}}\\
        \bottomrule
    \end{tabular}
    \medskip
    \caption{
        Previous and new upper and lower bounds on the number of edges in $n$-vertex quasiplanar drawings and $n$-vertex \threezero-free drawings. Upper bounds hold for all $n \geq 4$, while lower bounds are proven for infinitely many values of $n$.
    }
    \label{tab:quasiplanar_summary}
\end{table}

For quasiplanar drawings, we improve the lower bound for non-homotopic drawings of simple graphs from $7n-28$~\cite{kaufmann_density_formula} to $7.5n-28$ (\cref{con:simple_nh_quasiplanar}).
We remark that there still remains a gap to the upper bound of $8n-20$~\cite{ackerman_quasiplanar}.

For \threezero-free drawings, we improve the lower bound for non-homotopic drawings of simple graphs from $7.5n - \mathcal{O}(1)$~\cite{ackerman_quasiplanar} to $8n-28$ (\cref{con:simple_nh_threezero_free}), almost matching the upper bound of $8n-20$~\cite{ackerman_quasiplanar}.

For \threezero-free simple drawings, we give a lower bound of $7n-30$ (\cref{con:no_threezero_simple_dense}) based on a construction of Ackerman and Tardos , who also proved an upper bound of $7n-20$ \cite[Theorem 6]{ackerman_quasiplanar}.
See again \cref{tab:quasiplanar_summary}.

\subsection{Lower~Bounds~for~\texorpdfstring{\threezero}{3\_0}-Free~and~Quasiplanar Drawings~of~Simple~Graphs}
\label{subsec:lower_bounds}

We start with two constructions for non-homotopic drawings of simple graphs.
First, a quasiplanar construction, followed by a construction of \threezero-free drawings.
\begin{construction}
\label{con:simple_nh_quasiplanar}
    For infinitely many values of $n$, there is a simple $n$-vertex graph with $7.5n - 28$ edges that admits a non-homotopic quasiplanar drawing.
\end{construction}
\begin{proof}
    \crefname{enumi}{step}{steps}
    Let $n \geq 14$ be even.
    We describe a drawing on $n$ vertices with the desired properties.
    Precise edge paths are obtained by avoiding unnecessary crossings.  
    \cref{fig:simple_nh_quasiplanar} (top) depicts the final drawing $\Gamma$. 
    We draw
    \begin{enumerate}
        \item an $\frac{n}{2}$-cycle $C_{\rm in}$ as a circle with equidistant points and a concentric copy $C_{\rm out}$ of $C_{\rm in}$ with larger diameter. \label{enum:quasiplanar1}
    \end{enumerate}

        We call two radially aligned vertices on $C_{\rm in}$ and $C_{\rm out}$ \emph{partners}.
        For any vertex $x$, we denote by~$x'$ its partner, by $x_{+k}$ the vertex reached by $k$ clockwise steps on the cycle of $x$, and by~$x'_{+k}$ the partner of $x_{+k}$.
        We continue drawing
    \begin{enumerate}
        \setcounter{enumi}{1}
        \item a straight edge between any pair of partners, \label{enum:quasiplanar2}
        \item for any vertex $v$, a straight edge to $v'_{+1}$, \label{enum:quasiplanar3}
        \item for any vertex $v$ on $C_{\rm in}$ ($C_{\rm out}$) an edge from $v$ to $v_{+2}$, drawn through the interior of $C_{\rm in}$ (exterior of $C_{\rm out}$),
        \item for any vertex $v$ on $C_{\rm in}$ ($C_{\rm out}$), an edge to $v'_{+2}$. The edge passes through the space between $C_{\rm in}$ and $C_{\rm out}$, crosses $C_{\rm out}$ ($C_{\rm in}$) in the edge $v' v'_{+1}$ and continues through the exterior of $C_{\rm out}$ (interior of $C_{\rm in}$),
        \item for any vertex $v$ on $C_{\rm in}$ ($C_{\rm out}$) an edge to $v_{+3}$. The edge first passes through the space between $C_{\rm in}$ and $C_{\rm out}$, crosses $C_{\rm in}$ ($C_{\rm out}$) between $v_{+1}$ and $v_{+2}$ and then goes through the interior of $C_{\rm in}$ (exterior of $C_{\rm out}$), \label{enum:quasiplanar6}
        \item two zig-zag paths inside of $C_{\rm in}$ (outside of $C_{\rm out}$), whose edges connect vertices of distance at least 4 on $C_{\rm in}$ ($C_{\rm out}$) such that no parallel edges are created.
    \end{enumerate}

\begin{figure}[!htb]
    \centering
    \includegraphics{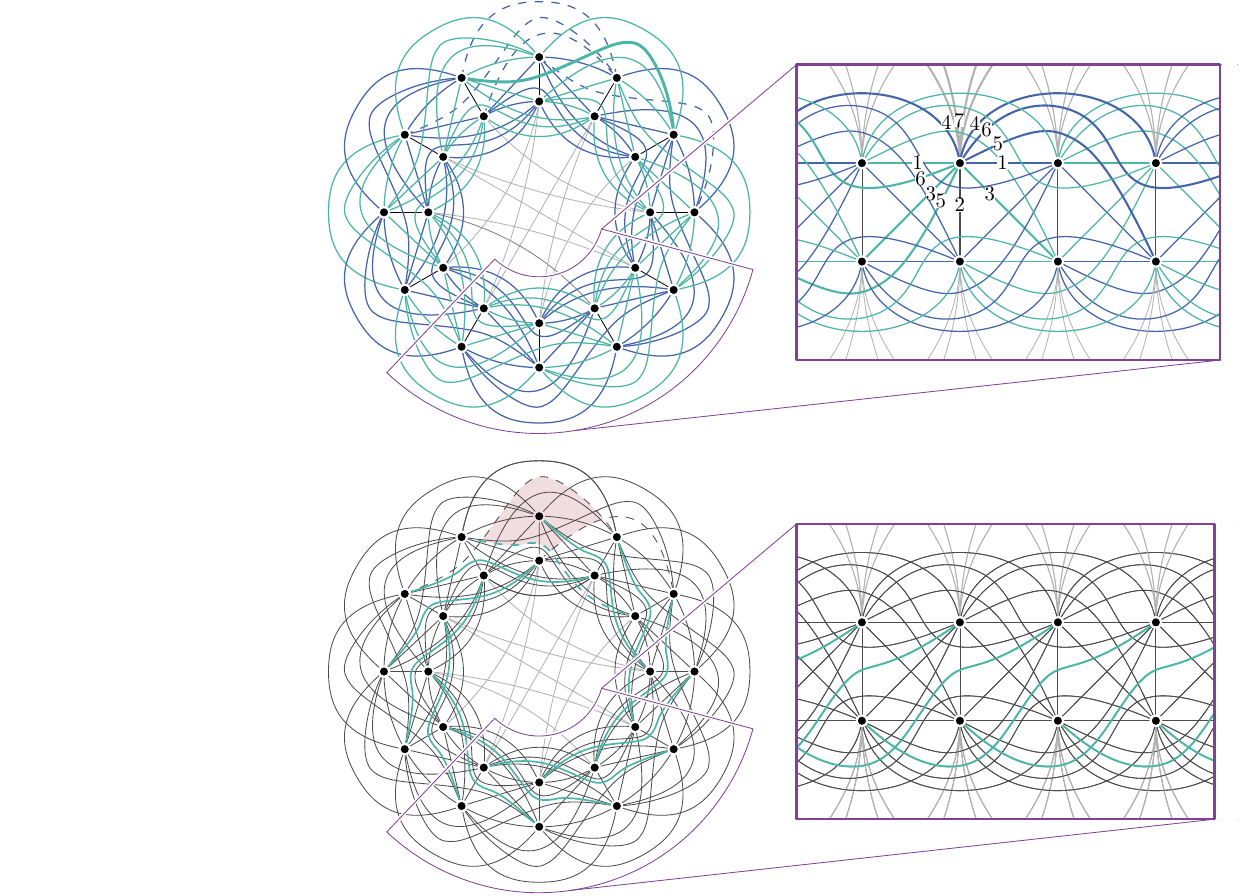}
    \caption{
        The drawings of \cref{con:simple_nh_quasiplanar,con:simple_nh_threezero_free} for $n=24$ with the outer zig-zag paths omitted for visual clarity, as well as zoomed-in strips of the local edge patterns.\\
        Top: \cref{con:simple_nh_quasiplanar} where edges except on the inner zig-zag paths are partitioned by color into three plane drawings.
        In the zoomed-in strip, the incident edges of one vertex are marked by the step of the construction, in which they are added. 
        In the The edges that form lenses with the thick edge are dashed.\\
        Bottom: \cref{con:simple_nh_threezero_free} where green edges are those not present in \cref{con:simple_nh_quasiplanar}.
        The shaded area is bounded by three pairwise crossing edges (drawn dashed).
    }
    \label{fig:simple_nh_quasiplanar}
    \label{fig:nh_threezero_free}
\end{figure}
    Step \ref{enum:quasiplanar1} adds $n$ edges, Step \ref{enum:quasiplanar2} adds $\frac{n}{2}$ edges, Steps \ref{enum:quasiplanar3}-\ref{enum:quasiplanar6} add $n$ edges each, and the final step contributes another $4 \cdot(\frac{n}{2} - 7)$ edges, totaling the desired $7.5n-28$ edges. 
    As the various steps connect vertices at different distances, the underlying graph is simple.

    We now argue that $\Gamma$ is quasiplanar. First, consider all edges except those in zig-zag paths.
    These can be locally 3-colored such that one color class consists only of the edges in \cref{enum:quasiplanar2} and edges of the same color do not cross (see \cref{fig:simple_nh_quasiplanar}).
    Therefore, any 3 pairwise crossing edges would contain an edge from \cref{enum:quasiplanar2}, but each such edge is only crossed by two other edges, which in turn do not cross.

    Now, consider without loss of generality the zig-zag paths $P$ and $P'$ in the interior of $C_{\rm in}$ and some edge $e=uv$ in $P$.
    Observe that $e$ is crossed by three sets of edges:
    (i) Edges in $P'$,
    (ii) edges from Steps 1-10 that cross $e$ near $u$, and
    (iii) edges from Steps 1-10 that cross $e$ near $v$. 
    
    We argue that no edges from (i), (ii), or (iii) cross. Note that no pair of edges in the same group crosses.
    Next, an edge from (ii) cannot cross an edge from (iii) since $u$ and $v$ have distance at least~4 on~$C_{\rm in}$, which two edges cannot span while crossing. 
    Further, note that no edge in (i) is incident to $u$ or $v$. Since edges in (ii) and (iii) cross edges from zig-zag paths only at $u$ and $v$, they cannot cross edges from (i).
    Thus, $e$ is not part of three pairwise crossing edges, and in total, the drawing is quasiplanar.

    Finally, we argue that $\Gamma$ is non-homotopic. Note that in the above step-by-step construction, simplicity is only violated by the edges added in \cref{enum:quasiplanar6}. 
    Each such edge is part of four lenses (see dashed edges in \cref{fig:simple_nh_quasiplanar} (top)).
    As each of these lenses contains a vertex, the drawing is non-homotopic.
\end{proof}

\begin{construction}
\label{con:simple_nh_threezero_free}
    For infinitely many values of $n$, there is a simple graph on $n$ vertices and $8n - 28$ edges that admits a non-homotopic drawing without \threezero-cells.
\end{construction}
\begin{proof}
    Let $n \geq 14$ be even and start with the non-homotopic quasiplanar and hence also \threezero-free drawing from \cref{con:simple_nh_quasiplanar}.
    We will reuse the notation established there.
    For any vertex $v$ on~$C_{\rm out}$, we draw an edge from~$v$ to~$v'_{+3}$ that first moves through the space between $C_{\rm in}$ and $C_{\rm out}$, crosses $C_{\rm in}$ in $v'_{+1} v'_{+2}$, and then passes through the interior of $C_{\rm in}$.  
    This process adds $\frac{n}{2}$ edges, none of which are parallel to each other or previous edges. 
    Therefore, we obtain a drawing of a simple graph with the claimed density.
    The resulting $\Gamma$ is shown in \cref{fig:nh_threezero_free} (Bottom).

    Note that $\Gamma$ is not quasiplanar as it contains triples of pairwise crossing edges (see again \cref{fig:nh_threezero_free} (Bottom)).
    However, checking the cells along the paths of the newly added edges, it can be verified that the construction preserves \threezero-freeness. 
    Similarly, one can check that none of the lenses introduced by the new edges is empty, and so $\Gamma$ also remains non-homotopic. 
\end{proof}

Finally, we consider simple \threezero-free drawings.
In this setting, Ackerman and Tardos mention that a $7n - \mathcal{O}(1)$ lower bound can be obtained by taking a subdrawing of their construction in \cite[Theorem 4]{ackerman_quasiplanar}.
However, they neither specify nor optimize the constant term.
Doing so, we obtain a lower bound of $7n-30$, almost matching the $7n-20$ upper bound by Ackerman and Tardos.

\begin{construction}
\label{con:no_threezero_simple_dense}
    For infinitely many values of $n$, there is a simple $n$-vertex graph with $7n - 30$ edges that admits a simple \threezero-free drawing. 
\end{construction}
\begin{proof}
    Fix $m \geq 3$ and start with an $m \times 3$-grid of hexagons on $8m + 6$ vertices, each hexagon containing a straight-line drawing of $K_6 \setminus e$ where the vertical diagonal is missing (see \cref{fig:no_threezero_simple_dense}, left).
    Identify opposite vertices along the side of the grid of length $m$ to obtain the surface of a cylinder whose top and bottom boundary contain six vertices each.
    This leaves us with $n \coloneqq 6m + 6$ vertices, which we group into $2m+2$ layers of three vertices each, labeled from top to bottom, see \cref{fig:no_threezero_simple_dense}, middle.
    The vertices on the boundary are those in layers $1$ and $2$ (top boundary), and $2m+1$ and $2m+2$ (bottom boundary).
    All vertices in layers $3,\ldots,2m$ currently have degree $11$, while the vertices in layers $2$ and $2m+1$ have degree $9$ and those in layers $1$ and $2m+2$ degree $4$.
    Note that, so far, the drawing is \threezero-free.
    
    Now, add long edges as depicted in \cref{fig:no_threezero_simple_dense}, middle.
    Every vertex gains one edge to a neighboring layer and one edge each to a layer three above and below (if those layers exist).
    
    This increases the degree of the vertices on layers $4,\ldots,2m-1$ by $3$ and the remaining vertex degrees by $2$. 
    After adding a triangle on layer $1$ and $2m+2$, we arrive at the final drawing $\Gamma$ (see \cref{fig:no_threezero_simple_dense}, right).
    Grouping layers with the same vertex degrees, we find that the number of edges in~$\Gamma$ is
    \[
        \frac12\Big( (2m-4)\cdot3\cdot14+2\cdot3\cdot(13+11+7) \Big) = 42m+12 = 7n-30.
    \]

    \begin{figure}[htb]
        \centering
        \begin{subfigure}[t]{0.28\textwidth}
            \centering
            \includegraphics[scale=.95,page=1]{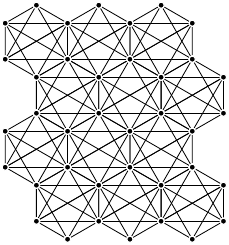}
            \label{subfig:no_threezero_simple_dense_a}
        \end{subfigure}%
        \begin{subfigure}[t]{0.38\textwidth}
            \centering
            \includegraphics[scale=.95,page=4]{no-3_0-cell-simple-lb.pdf}
            \label{subfig:no_threezero_simple_dense_b}
        \end{subfigure}
        \begin{subfigure}[t]{0.33\textwidth}
            \centering
            \includegraphics[scale=.95,page=3]{no-3_0-cell-simple-lb.pdf}
            \label{subfig:no_threezero_simple_dense_c}
        \end{subfigure}
         \caption{Illustrations of \cref{con:no_threezero_simple_dense} for $m=4$.}
        \label{fig:no_threezero_simple_dense}
    \end{figure}

    Note that no $\threezero$-cells are created by adding the long edges and the two triangles.
    Finally, the simplicity of $\Gamma$ follows from the fact that $\Gamma$ is locally homeomorphic to a straight-line drawing and no two edges are long enough to meet twice on the cylinder.
\end{proof}


\subsection{Simple Non-Quasiplanar \texorpdfstring{\threezero}{3\_0}-Free Drawings}
\label{subsec:nonquasiplanar}

\cref{tab:quasiplanar_summary} summarizes bounds on the edge densities of six classes of graphs defined by admitting either quasiplanar or \threezero-free drawings.
The differing edge density bounds and the fact that some of the classes contain multigraphs while others do not imply that most of these graph classes are indeed distinct.
However, for non-homotopic quasiplanar drawings and non-homotopic \threezero-free drawings, their edge density is not sufficient for a separation result. 
Because of this, we asked in the conference version of the present paper \cite[Open Problem 2]{hahn_edge_densities_gd}, whether every graph that admits a non-homotopic \threezero-free drawing also admits a non-homotopic quasiplanar drawing.
Here, we show that this is not the case.
For this, we use that every subdrawing of a quasiplanar drawing is also quasiplanar.

\begin{proposition}\label{con:non-quasi-3_0-free}
    There are infinitely many graphs that admit a simple \threezero-free drawing but no non-homotopic quasiplanar drawing.
\end{proposition}
\begin{proof}
    Start with an arbitrary graph $G$ that does not admit a non-homotopic quasiplanar drawing (any graph on $n$ vertices and $> 8n-20$ edges suffices).
    Let $\Gamma$ be any simple drawing of $G$.
    If $\Gamma$ contains no \threezero-cells, we are done.
    Otherwise, let $c$ be a \threezero-cell and $e=uv$ one of the edges on its boundary.
    Add a new vertex $w$ in $c$ to $\Gamma$ and draw an edge $uw$ that starts at $u$, follows $e$ closely (such that $uw$ crosses the exact same edges as $e$ does between $u$ and the part where $e$ bounds the \threezero-cell containing $w$, in the same order) until entering $c$ and ending in $w$; see \cref{fig:non-quasi-3_0-free}.
    In the new drawing $\Gamma^+$, the corridor bounded by $e$ and $uw$ only contains a \fourone-cell and some \fourzero-cells.
    Moreover, $c$ becomes a (degenerate) \sevenone-cell in $\Gamma^+$.
    All other cells in $\Gamma^+$ incident to $uw$ have a corresponding cell of the same type in $\Gamma$ with the only difference that the edge segments of $e$ are replaced by edge segments of $uw$.
    Therefore, adding $w$ and $uw$ reduces the number of \threezero-cells.
    Further, the new drawing $\Gamma^+$ retains simplicity, as $uw$ crosses a subset of the edges crossed by $e$.
    
    Repeating this process, a simple \threezero-free drawing $\Gamma'$ of a graph $G'$ with $G \subseteq G'$ is obtained.
    Since $G$ does not admit a non-homotopic quasiplanar drawing, neither does $G'$, and so, $G'$ is as desired.
\end{proof}

\begin{figure}[htb]
    \centering
    \includegraphics{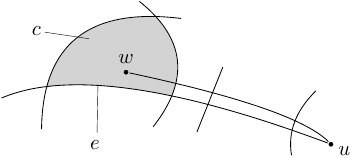}
    \caption{Illustration of \cref{con:non-quasi-3_0-free}. By adding a vertex and an edge, the number of \threezero-cells in a drawing can be reduced by one.}
    \label{fig:non-quasi-3_0-free}
\end{figure}

\section{Drawings without \texorpdfstring{\fourone}{4\_1}-Cells}
\label{sec:fourone}

In this section, we consider the \fourone-cell.
In the conference version of the present paper \cite[Construction 9]{hahn_edge_densities_gd}, we presented non-homotopic \fourone-free drawings of $n$-vertex graphs with $\Omega(n^2)$ edges. There, we erroneously claimed that the resulting drawings are simple.
In the present paper, this is \cref{con:fourone-non-hom}.
For simple drawings, we were so far unable to construct \fourone-free drawings in the plane with a quadratic number of edges.
Still, we do provide a construction with a superlinear number of edges (\cref{con:fourone_simple_plane}).
Curiously, we \emph{can} achieve a quadratic edge density even for simple drawings when we consider drawings on the torus instead of the plane (\cref{con:fourone_simple_torus}).


\subsection{Simple \texorpdfstring{\fourone}{4\_1}-Free Drawings}
\label{sec:simple-fourone}

We begin with the construction of simple \fourone-free drawings on the torus with a quadratic number of edges. 

\begin{construction}\label{con:fourone_simple_torus}
    For infinitely many values of $n$, there is a simple $n$-vertex graph on $\Omega(n^2)$
    edges admitting a simple \fourone-free drawing on the torus.
\end{construction}

\begin{figure}[htb]
    \centering
    \begin{subfigure}{.49\textwidth}
        \centering
        \includegraphics[scale=1.6,page=1]{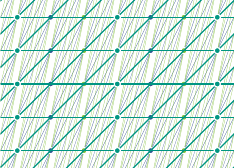}
    \end{subfigure}
    \begin{subfigure}{.49\textwidth}
        \centering
        \includegraphics[scale=1.6,page=2]{figures/no-4_1-toroidal-full.pdf}
    \end{subfigure}
    \caption{Part of the initial grid construction in \cref{con:fourone_simple_torus} for $k=2$. The figures highlight two of the three sets of edges with distinct slopes. The color of an edge matches the color of its two incident vertices, which are three columns apart.}
    \label{fig:no-4_1-torodial_full}
\end{figure}

\begin{proof}
    Let $k$ be some positive integer.
    We describe a toroidal drawing of a $2k$-regular graph on $180k$ vertices, giving the desired quadratic edge density.
    To this end, place the vertices on a toroidal grid with $20k$ rows and $9$ columns. 
    Let $v_{i,j}$ denote the vertex in row $i$ and column $j$. 
    All upcoming index operations in this notation will be modulo $20k$ in the row index and modulo $9$ in the column index.
    We assign to each column~$j$ a set $S_j$ of $k$ slopes, such that for any three consecutive columns $j,j+1,j+2$, the slope sets $S_j,S_{j+1},S_{j+2}$ are pairwise disjoint.
    Specifically, we set $S_j = \{0,\ldots,k-1\}$ for columns $j \in \{0,3,6\}$, $S_j = \{k,\ldots,2k-1\}$ for columns $j \in \{1,4,7\}$, and 
    $S_j = \{2k,\ldots,3k-1\}$ for columns $j \in \{2,5,8\}$.
    
    Now, we add for each vertex $v$ in column $j$ and each of the $k$ slopes $s \in S_j$ an edge with slope $s$ from $v$ to a vertex in column $j+3$.
    That is, for each vertex $v_{i,j}$, and for each slope $s \in S_j$, we add an edge with slope $s$ between $v_{i,j}$ and $v_{i+3s,j+3}$ that passes through $v_{i+s, j+1}$ and $v_{i+2s,j+2}$.
    See \cref{fig:no-4_1-torodial_full}.
    Note that this is not yet a valid drawing by our definition, as edges pass through vertices and there are points in which more than two edges cross.
    
    To fix this, consider any vertex $v_{i,j}$.
    There are $k$ edges incident to $v_{i,j}$ that end in column $j+3$, and $k$ edges incident to $v_{i,j}$ that start in column $j-3$.
    Since $S_j = S_{j-3} = S_{j+3}$, both these sets of $k$ edges have the same $k$ slopes.
    Additionally, there is a set $A$ of $2k$ edges that pass through $v_{i,j}$ but are not incident to $v_{i,j}$.
    The $2k$ edges in $A$ have the $2k$ slopes in $\{0,\ldots,3k-1\} - S_j$.
    The situation in a small disk around $v_{i,j}$ is depicted in \cref{fig:no-4_1-toroidal-vertex} (left). 
    We perturb the $2k$ edges of $A$ slightly within the disk around $v_{i,j}$ such that locally we obtain a valid drawing with the additional property that each of the $2k$ edges emanating from $v_{i,j}$ has its first crossing with a different of the $2k$ edges in $A$ (see \cref{fig:no-4_1-toroidal-vertex} (right)).
    This ensures that none of the cells incident to $v_{i,j}$ is a \fourone-cell.
    Repeating this procedure at every vertex $v_{i,j}$, the resulting drawing is \fourone-free.
    At each remaining point (in the space between the columns) in which more than two edges cross, we similarly locally perturb the edges to eventually obtain a valid drawing $\Gamma(k)$.

\begin{figure}[htb]
    \centering
    \includegraphics[scale=1]{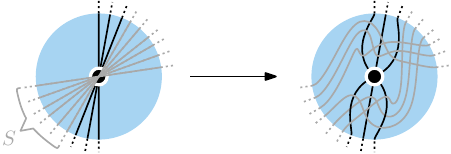}
    \caption{Illustration for \cref{con:fourone_simple_torus}. In a drawing, when a set $A$ of $2k$ edges intersect at a vertex of degree $2k$, the edges can be locally redrawn without introducing new edge intersections such that the vertex is not incident to any \fourone-cells.}
    \label{fig:no-4_1-toroidal-vertex}
\end{figure}

    Finally, we argue that $\Gamma(k)$ is a simple drawing.
    Note that any edge in $\Gamma(k)$ spans (including its endpoints) at most $9k$ rows and exactly $4$ columns.
    Hence the size of the grid ensures that two edges do not intersect multiple times by wrapping around the torus.
    Further, before the perturbations, all edges locally behave like straight-line segments, and hence any two (unperturbed) edges intersect at most once.
    Because the perturbations do not introduce any new crossings, $\Gamma(k)$ is simple. 
\end{proof}

Based on the toroidal construction, we obtain simple \fourone-free drawings in the plane with $\Omega(n^{3/2})$ edges. While this leaves a significant gap to the trivial upper bound of $\binom{n}{2}$, we believe that there are \fourone-free drawings with $\Omega(n^2)$ edges, see \cref{conj:fourone-simple}.

\begin{construction}\label{con:fourone_simple_plane}
    For infinitely many values of $n$, there is a simple $n$-vertex graph on $\Omega(n^{3/2})$
    edges admitting a simple \fourone-free drawing.
\end{construction}
\begin{proof}
    Let $k$ be some positive integer. Start with the toroidal drawing $\Gamma(k)$ described in \cref{con:fourone_simple_torus} but with an underlying $20k \times 9k$-grid instead of a $20k \times 9$-grid.
    See again \cref{fig:no-4_1-torodial_full} for a visualization.
    Now, since we cannot place $\Gamma(k)$ in the plane as is, we cut the torus between two neighboring columns (say between column $9k-1$ and $0$). The cut is positioned so that it does not pass through any edge crossings.
    The resulting ``drawing'' can be embedded in the plane but contains many halved edges that end on the boundaries of the cut we performed.
    We take care of these half-edges by placing a new vertex each at the point in the plane where the half-edge ends, and, in doing so, add degree 1-vertices.
    
    This leaves us with our final drawing $\Gamma'(k)$. Simplicity of $\Gamma'(k)$ carries over from simplicity of $\Gamma(k)$, since no new crossings are created. 
    Further, \fourone-freeness of $\Gamma'(k)$ at preexisting vertices carries over from $\Gamma(k)$, while all the new vertices have degree 1 and are therefore not incident to a \fourone-cell.
    Finally, for the edge density, we count how many degree 1-vertices are added during the construction.
    The edges that are cut in half are exactly those that are incident to one vertex in column $0, 1$, or $2$ and one vertex in column $9k-3, 9k-2$, or $9k-1$.
    There are $3 \cdot 20k \cdot k = 60k^2$ such edges. 
    For each of them, we introduce two vertices of degree 1 and increase the number of edges by one.
    On the other hand, every vertex already present in $\Gamma(k)$ has degree $2k$.
    In total, $\Gamma'(k)$ contains $20k \cdot 9k + 2 \cdot 60k^2 = \Theta(k^2)$ vertices and, $20k \cdot 9k \cdot k + 60k^2 = \Theta(k^3)$ edges, giving the claimed edge density.
\end{proof}


\subsection{Non-Homotopic \texorpdfstring{\fourone}{4\_1}-Free Drawings}
\label{sec:non-hom-fourone}

In the non-homotopic setting, the following construction settles the order of growth of the edge density of \fourone-free drawings by providing a quadratic lower bound. Note that in the conference version of the present paper \cite[Construction 9]{hahn_edge_densities_gd}, we incorrectly claimed that this construction yields a simple drawing.

\begin{construction}\label{con:fourone-non-hom}
    For infinitely many values of $n$, there is a simple $n$-vertex graph on $\frac{n^2}{18} - \calO(n)$ edges admitting a non-homotopic \fourone-free drawing.
\end{construction}
\begin{proof}
    For each integer $k \geq 2$, we construct a simple $k$-regular graph $G(k)$ on $n = 3 \cdot (3k-1)$ vertices together with a non-homotopic \fourone-free drawing; see \cref{fig:no-4_1-cell-component,fig:no-4_1-cell-full-graph} for an illustration.
    The graph $G(k)$ consists of three copies of a graph $G'(k)$ on $3k-1$ vertices.
    We describe $G'(k)$ in tandem with a temporary \enquote{drawing} that does not yet fulfill our definition of a drawing. 
    We start with a set $A$ of $3k-1$ equidistant points on a circle $C_{\rm out}$.
    The points in $A$ will be the vertices of $G'(k)$, which we denote $a_0,\ldots,a_{3k-2}$ in cyclic ordering around $C_{\rm out}$.
    For each $i = 0,\ldots,3k-2$ draw a straight-line edge $e_i$ between $a_i$ and $a_{i+k}$ (throughout this construction all indices are taken modulo $3k-1$).
    Let $b_i$ be the crossing point of $e_i$ and $e_{i+1}$, and $B$ the set of all such $b_i$.
    
        \begin{figure}[htb]
        \centering
        \includegraphics{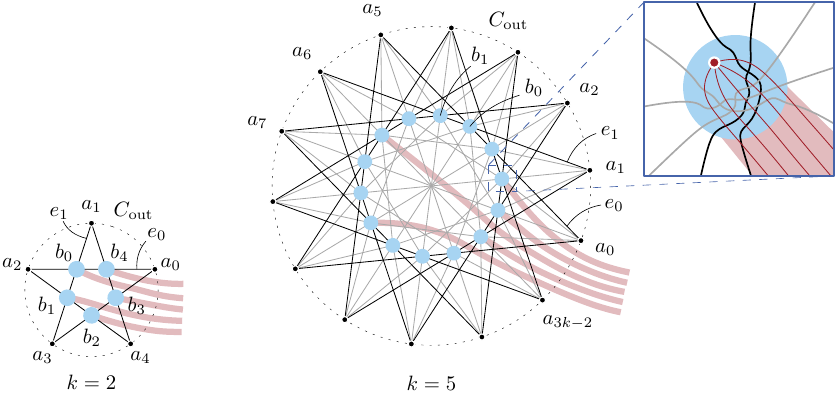}
        \caption{
            Illustration of the $k$-regular graph $G'(k)$ on $3k-1$ vertices for $k=2$ and $k=5$.
            At each of the $3k-1$ points in $B$ (blue disks), $k$ edges are pairwise crossing.
            The zoomed-in subfigure depicts the situation at a point $b \in B$ after interleaving the three copies of $G'(k)$ to obtain $G(k)$ (see also \cref{fig:no-4_1-cell-full-graph}).
            In particular, the red elements (including the corridors) belong to a different copy of $G'(k)$.
            For visual clarity, only some corridors are shown for $k=5$.
        }
        \label{fig:no-4_1-cell-component}
    \end{figure}

    Then $b_0,\ldots,b_{3k-2}$ are equidistant points (blue disks in \cref{fig:no-4_1-cell-component}) on a smaller circle $C_{\rm in}$ concentric to $C_{\rm out}$.
    Note that $e_i$ crosses $e_{i+1}$ at $b_i$ and $e_{i-1}$ at $b_{i-1}$.
    Next, we add further edges such that each $a_i$ has exactly $k$ incident edges and at each $b_i$, exactly $k$ edges pairwise cross (thus making the result not a drawing in our sense).
    To this end, we insert edges from $a_i$ to $a_{i+k+1},\ldots, a_{i+\lfloor 3k/2 \rfloor}$ where the edge $a_ia_{i+k+j}$ with $j=1,\ldots,\lfloor k/2 \rfloor$ goes through $b_{i-j-1}$ and $b_{i+2j}$; see again \cref{fig:no-4_1-cell-component} for an illustration.
    Let us emphasize (as we will need it later) that every edge passes through exactly two points in $B$. 
    This completes the construction of $G'(k)$.
    Observe that the described drawing of $G'(k)$ is simple (in fact, almost straight-line) and every vertex of $G'(k)$ lies on the same cell.

	\begin{figure}[htb]
        \centering
    \includegraphics{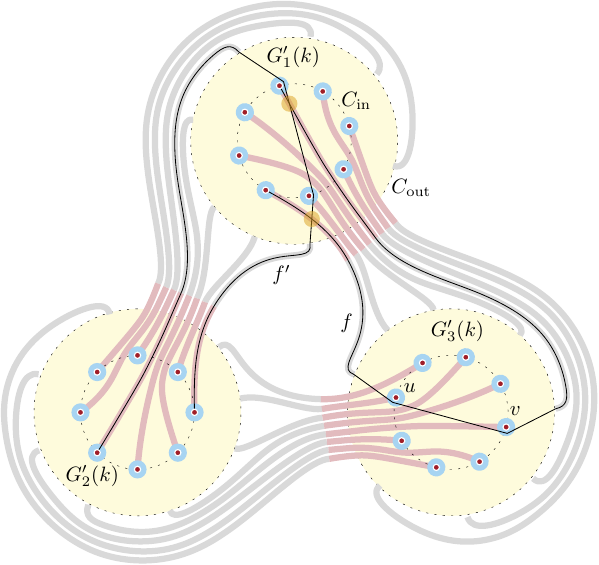}
        \caption{
            Connecting the drawings of $G'_1(k)$, $G'_2(k)$, and $G'_3(k)$ to a \fourone-free drawing of $G(k)$. The center of each $G'_i(k)$ is depicted as a yellow disk.
            The two edges $f$ and $f'$ cross twice (marked orange) but the lens they create is non-empty, since it contains the vertices $u$ and $v$.
        }
        \label{fig:no-4_1-cell-full-graph}
    \end{figure}

    We now take three copies of $G'(k)$, called $G'_1(k),G'_2(k),G'_3(k)$, initially draw them disjointly as previously described, and then modify and interleave the three \enquote{drawings} so as to obtain a valid drawing that contains no \fourone-cell.
    See \cref{fig:no-4_1-cell-full-graph} for an illustration.
    Index operations for the $G'_i(k)$ will be performed modulo~3.
    We start by placing the three copies in three disjoint disks, which we call the \emph{centers} of the respective $G'_i(k)$.
    We shall move the vertices of $G'_i(k)$ in such a way that each vertex $a$ of $G'_i(k)$ lies close to a point $b$ in the set~$B$~of~$G'_{i+1}(k)$.
    We locally perturb the $k$ edges of $G'_{i+1}(k)$ that all cross in $b$ so that each of the $k$ edges of $G'_i(k)$ emanating from $a$ has its first crossing with a different edge of $G'_{i+1}(k)$, see the zoomed-in subfigure in \cref{fig:no-4_1-cell-component}.
    Loosely speaking, the edges of $G'_{i+1}(k)$ that were crossing in a point $b$ before being perturbed ``support'' the edges of $G'_i(k)$ starting at a vertex $a$ of $G'_i(k)$.

    More precisely, we consider in the temporary drawing of $G'(k)$ for each $b \in B$ a straight and very narrow corridor starting at point $b$ and ending on $C_{\rm out}$ between $a_0$ and $a_{3k-2}$. These corridors are routed so that they intersect no existing edge twice (including those currently passing through~$b$).
    We perturb the $k$ edges close to $b$ to obtain a simple drawing in which $b$ sees a segment of each of these $k$ edges.
    This is similar to the redrawing procedure depicted in \cref{fig:no-4_1-toroidal-vertex} and ensures that the final drawing is valid.
    Now, place the simple drawings of $G'_1(k),G'_2(k),G'_3(k)$ with corridors disjointly next to each other.
    Move the vertices of $G'_1(k)$ (without introducing new crossings between edges of $G'_1(k)$) through the prepared corridors of $G'_2(k)$ (each vertex through one corridor) into the center of $G'_2(k)$.
    Place each vertex $a$ of $G'_1(k)$ at the end of the corridor of $G'_2(k)$ (at the point $b \in B$) so that each edge (of $G'_1(k)$) at $a$ crosses through a different edge (of $G'_2(k)$) near $b$.
    This ensures that $a$ is not part of any \fourone-cell.
    By similarly threading the vertices of $G'_2(k)$ through the corridors of $G'_3(k)$ and the vertices of $G'_3(k)$ through the corridors of $G'_1(k)$, \fourone-freeness is ensured.
    Note that every edge of $G'_i(k)$ now passes exactly two vertices of $G'_{i-1}(k)$ that are now placed within the center of $G'_i(k)$.
    This finishes the construction of the drawing of $G(k)$.

    We are left with arguing non-homotopicity.
    First note that, while each component is itself drawn simply and no corridor crosses any edge twice, the resulting drawing is not simple.
    The reason for this is that every edge passes through two corridors (near its two incident vertices), which may cross a common set of edges (see again \cref{fig:no-4_1-cell-full-graph}).
    More precisely, all violations to simplicity occur because an edge $f$ of $G_i'(k)$ that starts and ends in the center of $G'_{i+1}(k)$ is crossed twice by an edge $f'$ of $G'_{i+1}(k)$ within the center of $G'_{i+1}(k)$.
    To argue that the drawing is non-homotopic, we show that any such lens~$\ell$ bounded by $f$ and $f'$ is non-empty.
    Note that, by construction, $f$ passes near two vertices $u$ and $v$ of $G_{i-1}'(k)$ within the center of $G'_i(k)$.
    We will show that $u$ and $v$ lie inside the lens $\ell$.
    For contradiction's sake, assume that one of the two vertices, say $u$, was not contained in the interior of $\ell$.
    Again, by construction, $f$ ``supports`` an edge $g$ incident to $u$. 
    That is, $g$ has its first crossing with $f$, where by assumption it enters $\ell$.
    Because of the way the copies are interleaved, $g$ has to follow a corridor to enter the center of $G_{i-1}'(k)$ after crossing $f$.
    However, none of the corridors that connect the center of $G_{i}'(k)$ to the center of $G_{i-1}'(k)$ are contained in the interior of $\ell$, except for their initial segments inside the center of $G_{i}'(k)$.
    Hence, in order to follow a corridor into the center of $G_{i-1}(k)$, $g$ needs to cross $f$ again within the center of $G_{i}'(k)$. 
    But then the corridor through which $g$ travels crosses $f$ twice, a contradiction to the fact that the corridors are drawn simply with respect to the initial drawings of $G'(k)$.
    This contradiction shows that $\ell$ contains $u$ (and also $v$) in its interior.
    The same argument holds for every lens in the drawing, and thus, our drawing of $G(k)$ is non-homotopic.
\end{proof}


\section{Drawing All Graphs without Some Cell Type}
\label{sec:all-graphs}

In \cref{sec:complete-graphs}, we have seen that most cell types can be avoided in simple drawings of arbitrarily large complete graphs. 
However, this does not mean that \emph{all} simple graphs can be drawn when one of these cell types is forbidden.
The issue is that the graph property of admitting a \celltype-free drawing is not closed under taking subgraphs.
For example, $K_5$ admits a simple drawing without \sixthree-cells, while $K_3$ does not.
Thus, the structure of all graphs with (simple, non-homotopic) \celltype-free drawings could be quite complex.

In this section, we show that for any type of cell \celltype, whose boundary is not incident to a crossing, this is not the case: For any such \celltype, we show that all but a finite number of connected simple graphs admit a simple/non-homotopic \celltype-free drawing.
In fact, the next proposition shows that almost all connected simple graphs admit a simple drawing in which every cell is incident to a crossing. The main idea is to consider crossing-maximal convex drawings.

\begin{theorem}
\label{thm:no-cycle-cells}
    Let $G$ be a connected simple graph with $G \notin \{K_3, K_4\} \cup \{K_{1,n} \mid n \geq 0\}$.
    Then $G$ admits a simple drawing in which every cell is incident to at least one crossing.
\end{theorem}
\begin{proof}
    For complete graphs on $n \geq 5$ vertices, the crossing-maximal drawings in \cite{harborthEdgesWithoutCrossings1974} fulfill the requirements.
    Thus, from now on we may assume that $G = (V,E)$ is a connected simple graph that is neither a star nor complete and contains at least $4$ vertices.
    Consider a straight-line drawing $\Gamma$ of~$G$ in which all vertices are placed on a common circle $C$ such that $\Gamma$ has the maximum number of crossings among all such drawings.
    Note that two edges in $\Gamma$ cross if and only if the order of their vertices along $C$ alternates between the two edges.

    We first prove that all interior cells in $\Gamma$ are incident to a crossing.
    Suppose for the sake of contradiction that $\Gamma$ contains an interior cell $c$ bounded by a plane $k$-cycle $H \subseteq G$, where $k \geq 3$.
   
    If $G = H$, then $k \geq 4$. Switching the positions of any non-adjacent pair of vertices on $C$ increases the number of crossings, thus contradicting our assumption that $\Gamma$ was crossing-maximal. 
    
    Hence we may assume that $G \neq H$. 
    Thus, there is an edge $uv \in E(H)$ not on the convex hull of $V$.
    Let $S$ be the set of vertices on the part of the circle $C$ bounded by (but not including) $u$ and $v$ that contains all other vertices of $H$.
    As $G \neq H$ and $G$ is connected, we may assume that $u$ has a neighbor $w$ with $w \notin S \cup \{u,v\}$.
    Let $R$ denote the part of the circle $C$ bounded by $u$ and $w$ that does not include $v$.

    Let $\Gamma'$ be the drawing that is obtained by moving all vertices in $S$ onto $R$ such that their clockwise order along $C$ is reversed relative to $\Gamma$. See \cref{fig:no-cycle-cells} (top left) for an illustration.
    We argue that there are more crossings in $\Gamma'$ than in $\Gamma$.
    Note that, since $H$ is an uncrossed cycle in $\Gamma$, all crossings in $\Gamma$ are either spanned by four vertices in $V \setminus S$ or by four vertices in $S \cup \{u,v\}$.
    All these crossings are also present in $\Gamma'$, since the subdrawings of $\Gamma$ and $\Gamma'$ on $V \setminus S$ and $S \cup \{u,v\}$, respectively, have the same vertex orders along $C$ (up to global reflection).
    Additionally, in $\Gamma'$, the edge $v v' \in E(H)$ with $v' \neq u$ crosses $uw$.
    This contradicts crossing-maximality of $\Gamma$, so our assumption was faulty. Thus, all interior cells in~$\Gamma$ are incident to a crossing.

    \begin{figure}[htb]
        \centering
        \begin{subfigure}{.49\textwidth}
            \centering
            \includegraphics[page=1]{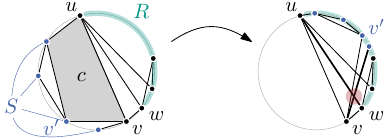}
        \end{subfigure}~
        \begin{subfigure}{.49\textwidth}
            \centering
            \includegraphics[page=2]{no-cycle-cells-lb.pdf}
        \end{subfigure}
        \\[3ex]
        \begin{subfigure}{.49\textwidth}
            \centering
            \includegraphics[page=3]{no-cycle-cells-lb.pdf}
        \end{subfigure}
        \begin{subfigure}{.49\textwidth}
            \centering
            \includegraphics[page=4]{no-cycle-cells-lb.pdf}
        \end{subfigure}
        \caption{
            Illustrations for \cref{thm:no-cycle-cells}. Top left: Arguing that a crossing-maximal convex drawing has no interior uncrossed cells. Top right: Ensuring the outer cell is incident to a crossing in Case 1. Bottom: Arguing that the outer cell is incident to a crossing in Case 2.
        }
        \label{fig:no-cycle-cells}
    \end{figure}

    If the outer cell in $\Gamma$ is also incident to a crossing, we are done.
    Otherwise, we distinguish two cases. In the first case we will use that $G$ is not complete while in the second case we will need that $G$ is not a star graph.
    
    \textbf{Case 1:} Every edge of the convex hull of $V$ is an edge of $G$.
    Let $v_0, \ldots, v_{n-1}$ be the vertices on $C$ in cyclic order. 
    All subsequent index operations are implicitly performed modulo $n$. 
    Let $v_i$ and $v_j$ be two non-adjacent vertices such that $j-i$ is minimal among all non-adjacent vertex pairs.
    Note that such a pair exists because $G$ is not complete and that the current case implies $j-i \geq 2$. Hence, placing $v_j$ in the space between $v_i$ and $v_{i+1}$ on $C$ gives a new drawing $\Gamma'$; see \cref{fig:no-cycle-cells} (top right) for a depiction.
    Since $v_i v_j \notin E$, the outer cell in $\Gamma'$ is incident to a segment of the edge $v_i v_{i+1}$.
    Since further $v_i v_{i+1}$ is crossed by $v_j v_{j+1}$ in $\Gamma'$, this ensures that the outer cell is incident to a crossing. 
    To finish Case~1, we are left with verifying that the move of $v_j$ between $\Gamma$ to $\Gamma'$ did not introduce any uncrossed interior cells.
    For this, suppose there is an interior cell in $\Gamma'$ bounded by a cycle $H \subseteq G$.
    The moved vertex $v_j$ cannot be in $H$, as all its incident edges except $v_j v_{i+1}$ are crossed by $v_i v_{i+1}$.
    Similarly, $v_{i+1}, \ldots, v_{j-2}$ are not part of $H$ since $v_{i+1}, \ldots, v_j$ form a convex drawing of a complete graph in $\Gamma'$ (otherwise $v_i v_j$ was not a shortest missing edge).
    Further, all edges between $v_{j-1}$ and vertices $v_{j+2}, \ldots, v_i$ except the edge $v_{j-1} v_{j+1}$ (if it exists) are crossed by the edge $v_j v_{j+1}$ in $\Gamma'$, implying that, again, $v_{j-1}$ is not contained in $H$.
    Therefore, $H$ contains only vertices in $v_{j+1}, v_{j+2},\ldots, v_i$.
    However, since $v_j$ does not change its position with respect to these vertices, $H$ also bounds a cell in $\Gamma$. 
    This contradicts our assumption that all inner cells of $\Gamma$ have an incident crossing.
    Hence, $\Gamma'$ is indeed a drawing of $G$ with the desired properties.
    
    \textbf{Case 2:} In $\Gamma$, two neighbors $u$ and $v$ on $C$ are not adjacent in $G$.
    Without loss of generality, let $u$ precede $v$ in clockwise order.
    Further, let $u'$ and $v'$ be the first neighbor along the boundary of the outer cell of $\Gamma$ starting clockwise from $u$ and counterclockwise from $v$, respectively . Since there are no crossings on the outer face of $\Gamma$, both $u'$ and $v'$ are cut-vertices in $G$, disconnecting $u$ and $v$ when removed.
    If $u' \neq v'$, then switching the positions of $u$ and $v$ introduces a new crossing between $u u'$ and $v v'$, while retaining all other crossings, see \cref{fig:no-cycle-cells} (bottom left) --- a contradiction to the crossing-maximality of $\Gamma$.

    So, we may assume that $u' = v'$.
    Moreover, by the previous reasoning, we may assume that any two non-adjacent vertices of $G$ that are neighbors on $C$ in $\Gamma$ are adjacent to a common cut-vertex.
    If both $u$ and $v$ have degree 1, then walking along $C$ eventually gives two non-adjacent neighboring vertices on $C$, both adjacent to $u'$ and one of which of degree at least 2. 
    This is guaranteed by the previous assumption and since $G$ is not a star.
    Without loss of generality, let $\deg(v) > 1$. 
    Then, switching the position of $u$ and $v$ on $C$ increases the number of crossings, since the edge $w v$ to any vertex $w \neq u'$ adjacent to $v$ in $G$ now crosses $u u'$. See \cref{fig:no-cycle-cells} (bottom right). Once more, we reached a contradiction to crossing-maximality of $\Gamma$.
    Therefore, the outer cell of $\Gamma$ is also incident to a crossing and the claim is proven.
\end{proof}

With \cref{thm:no-cycle-cells} at hand, we can fully characterize the simple connected graphs that admit a simple \celltype-free drawing for every (non-degenerate) cell type \celltype whose boundary is not incident to any crossing.

\begin{corollary}\label{cor:no-cycle-cells}
    For an integer $m \geq 2$, let $\celltype_m$ denote the cell type whose boundary is connected and bounded by $m$ vertices and $m$ uncrossed edge segments.
    Let $G$ be a connected simple graph.
    Then $G$ admits a simple $\celltype_m$-free drawing, unless
    \begin{itemize}
        \item $m = 3$ and $G = K_3$, or
        \item $m$ is even and $G=K_{1,m/2}$.
    \end{itemize}
\end{corollary}
\begin{proof}
    For all connected simple graphs $G$ with $G \notin \{K_3, K_4\} \cup \{K_{1,k} \mid k \geq 0\}$, the statement is an immediate consequence of \cref{thm:no-cycle-cells}.
    For $K_4$, a plane drawing contains only $\celltype_3$-cells and a convex straight-line drawing consists of a $\celltype_4$-cell and four \fivetwo-cells, so any $\celltype_m$ can be avoided by choosing one of the two drawings.
    The unique drawing of $K_{1,0}=K_1$ has no cell of type $\celltype_m$ for any $m\ge 2$.
    Finally, for the complete graph $K_3$ and the star graphs $K_{1,k}$, the unique (up to homeomorphism of the plane) simple drawings contain only the cell type $\celltype_3$ and $\celltype_{2k}$, respectively, 
    which both explains the exceptions and shows the statement also for these graphs. 
\end{proof}

We remark that \cref{thm:no-cycle-cells} can be extended to disconnected graphs.
In this case, if a graph $G$ contains two components with at least one edge each, there is a simple drawing of $G$ in which every cell is incident to a crossing.
If otherwise, $G$ consists of a graph $H$ and some isolated vertices, $G$ admits such a drawing if and only if $H$ does.

\cref{cor:no-cycle-cells} can be generalized to disconnected graphs in a similar way.
For connected cell types without crossings, the only disconnected exception graph is $K_3 + K_1$, which does not admit a $\celltype_3$-free drawing. However, one might also want to consider cell types with disconnected boundary and no incident crossing when including disconnected graphs. 
Then, the only exception graphs are $K_3$, $K_3 + K_1$ and $S + nK_1$ for any star graph $S$ and $n \geq 0$, with respect to the cells that appear in their unique drawings.

\section{Concluding Remarks}
\label{sec:conclusion}

With this paper, we initiated the study of drawings of graphs with a forbidden type of cell.
For arbitrary, non-homotopic, and simple drawings, and all cell types $\celltype$ but \fourzero, we determined whether graphs that admit a \celltype-free drawing have at most linear or superlinear edge density.
In many cases we obtain near-tight bounds.
Being unable to control the \fourzero-cell with our methods and constructions, we wonder in which regime the number of edges in \fourzero-free drawings lies.

\begin{problem}
    \label{prob:fourzero_density}
    Is the edge density of simple graphs that admit a (simple, non-homotopic) \fourzero-free drawing quadratic in $n$?
\end{problem}

For simple \fourone-free drawings, a gap remains between the $\Omega(n^{3/2})$ lower bound (\cref{con:fourone_simple_plane}) and the trivial $\mathcal{O}(n^2)$ upper bound.
In light of our quadratic lower bounds in the relaxed settings of toroidal simple drawings (\cref{con:fourone_simple_torus}) and non-homotopic drawings (\cref{con:fourone-non-hom}), we believe the true edge density for simple \fourone-free drawings in the plane should be quadratic.

\begin{conjecture}
    \label{conj:fourone-simple}
    The edge density of $n$-vertex graphs that admit a simple \fourone-free drawing is $\Omega(n^2)$.  
\end{conjecture}

The vast number of questions asked about other beyond planar graph classes -- for example, about complexity of recognition and stretchability -- can be carried over to our setting.
However, our forbidden patterns rely on the topology of concrete drawings rather than only abstract combinatorics of the graph or the drawing (such as crossing edge pairs).
Hence, the resulting graph classes are not always closed under taking subgraphs.
In light of this, the results that almost all cell types can be avoided in drawings of $K_n$ from \cref{sec:complete-graphs} raise the question of which cell types can be avoided in drawings of any graph.
For cell types whose boundary does not contain any crossing, we gave an answer in \cref{sec:all-graphs} but a full characterization remains elusive.

\begin{problem}
    For which cell types \celltype does every (connected) graph admit a (simple, non-homotopic) \celltype-free drawing?
\end{problem}

Apart from our contributions about forbidden cells, we also improved the lower bound on the edge density of simple graphs  that admit a quasiplanar drawings from $7n - \mathcal{O}(1)$ to $7.5n - \mathcal{O}(1)$.
However, a linear gap remains to the upper bound of $8n - 20$.

\begin{problem}
    Are there quasiplanar drawings of $n$-vertex simple graphs with $8n- \mathcal{O}(1)$ edges?
\end{problem}


 \section*{Acknowledgements}
We thank the various anonymous referees, who, through their valuable comments, helped to significantly improve the presentation of this paper.

\bibliography{02_bibliography.bib}

@article{ackerman_quasiplanar,
    author       = {Eyal Ackerman and G{\'{a}}bor Tardos},
    title        = {On the maximum number of edges in quasi-planar graphs},
    journal      = {Journal of Combinatorial Theory {Series A}},
    volume       = {114},
    number       = {3},
    pages        = {563--571},
    year         = {2007},
    _url          = {https://doi.org/10.1016/j.jcta.2006.08.002},
    doi          = {10.1016/J.JCTA.2006.08.002},
    timestamp    = {Fri, 07 Jun 2024 15:04:46 +0200},
    biburl       = {https://dblp.org/rec/journals/jct/AckermanT07.bib},
    bibsource    = {dblp computer science bibliography, https://dblp.org}
}

@InProceedings{kaufmann_density_formula,
    author =	{Kaufmann, Michael and Klemz, Boris and Knorr, Kristin and M. Reddy, Meghana and Schr\"{o}der, Felix and Ueckerdt, Torsten},
    title =	{{The Density Formula: One Lemma to Bound Them All}},
    booktitle =	{32nd International Symposium on Graph Drawing and Network Visualization (GD 2024)},
    pages =	{7:1--7:17},
    series =	{Leibniz International Proceedings in Informatics (LIPIcs)},
    ISBN =	{978-3-95977-343-0},
    ISSN =	{1868-8969},
    year =	{2024},
    volume =	{320},
    publisher =	{Schloss Dagstuhl -- Leibniz-Zentrum f{\"u}r Informatik},
    doi =		{10.4230/LIPIcs.GD.2024.7}
}

@InProceedings{hahn_edge_densities_gd,
    author =	{Hahn, Benedikt and Ueckerdt, Torsten and Vogtenhuber, Birgit},
    title =	{{Edge Densities of Drawings of Graphs with One Forbidden Cell}},
    booktitle =	{33nd International Symposium on Graph Drawing and Network Visualization (GD 2025)},
    series =	{Leibniz International Proceedings in Informatics (LIPIcs)},
    year =	{2025},
    volume =	{357},
    publisher =	{Schloss Dagstuhl -- Leibniz-Zentrum f{\"u}r Informatik},
    ISBN = {978-3-95977-403-1},
    doi = {10.4230/LIPIcs.GD.2025.33},
    note = {to appear}
}

@article{harborthSimpleArrangementsPseudolines1985,
  title = {Some {{Simple Arrangements}} of {{Pseudolines}} with a {{Maximum Number}} of {{Triangles}}},
  author = {Harborth, Heiko},
  year = {1985},
  journal = {Annals of the New York Academy of Sciences},
  volume = {440},
  number = {1},
  pages = {31--33},
  issn = {1749-6632},
  doi = {10.1111/j.1749-6632.1985.tb14536.x},
  urldate = {2025-04-02},
  langid = {english}
}

@article {MR2858666,
    author = {Blanc, J\'er\'emy},
    title = {The best polynomial bounds for the number of triangles in a simple arrangement of {$n$} pseudo-lines},
    journal = {Geombinatorics},
    fjournal = {Geombinatorics},
    volume = {21},
    year = {2011},
    number = {1},
    pages = {5--14},
    issn = {1065-7371},
}

@article{didimoSurveyBeyondGraphDrawing2020,
    author = {Didimo, Walter and Liotta, Giuseppe and Montecchiani, Fabrizio},
    title = {A Survey on Graph Drawing Beyond Planarity},
    year = {2019},
    issue_date = {January 2020},
    publisher = {Association for Computing Machinery},
    volume = {52},
    number = {1},
    issn = {0360-0300},
    url = {https://doi.org/10.1145/3301281},
    doi = {10.1145/3301281},
    journal = {ACM Computing Surveys},
    articleno = {4},
    numpages = {37},
}

@article{roudneffQuadrilateralsPentagonsArrangements1987,
  title = {Quadrilaterals and Pentagons in Arrangements of Lines},
  author = {Roudneff, Jean-Pierre},
  year = {1987},
  journal = {Geometriae Dedicata},
  volume = {23},
  number = {2},
  issn = {0046-5755, 1572-9168},
  doi = {10.1007/BF00181277},
}

@article{leviTeilung1926,
  title={{Die Teilung der projektiven Ebene durch Gerade oder Pseudogerade}},
  author={Levi, Friedrich},
  journal={Ber. Math.-Phys. Kl. S{\"a}chs. Akad. Wiss},
  volume={78},
  pages={256--267},
  year={1926}
}

@article{felsnerTrianglesEuclideanArrangements1999,
  title = {Triangles in {{Euclidean Arrangements}}},
  author = {Felsner, Stefan and Kriegel, Klaus},
  year = {1999},
  journal = {Discrete \& Computational Geometry},
  volume = {22},
  number = {3},
  pages = {429--438},
  issn = {1432-0444},
  doi = {10.1007/PL00009471}
}

@incollection{harborthTWOCOLORINGSSIMPLEARRANGEMENTS1984,
  title = {{{Two-colorings of simple arrangements}}},
  booktitle = {Finite and {{Infinite Sets}}},
  author = {Harborth, Heiko},
  year = {1984},
  pages = {371--378},
  publisher = {North-Holland},
  doi = {10.1016/B978-0-444-86893-0.50029-2},
  isbn = {978-0-444-86893-0},
}

@book{grunbaumArrangementsSpreads1972,
  title = {Arrangements and Spreads},
  author = {Gr{\"u}nbaum, Branko},
  year = {1972},
  series = {Regional Conference Series in Mathematics},
  number = {10},
  publisher = {American Mathematical Society},
  isbn = {978-0-8218-1659-2},
  lccn = {QA1 QA471 .R33 no. 10},
}

@article{leanosSimpleEuclideanArrangements2007,
  title = {Simple {{Euclidean Arrangements}} with {{No}} ($\geq 5$)-{Gons}},
  author = {Leanos, Jesus and Lomeli, Mario and Merino, Criel and Salazar, Gelasio and Urrutia, Jorge},
  year = {2007},
  journal = {Discrete \& Computational Geometry},
  volume = {38},
  number = {3},
  pages = {595--603},
  issn = {0179-5376},
  doi = {10.1007/s00454-007-1351-6}
}

@incollection{felsner2017pseudoline,
  title={Pseudoline arrangements},
  author={Felsner, Stefan and Goodman, Jacob E},
  booktitle={Handbook of Discrete and Computational Geometry},
  pages={125--157},
  year={2017},
  publisher={Chapman and Hall/CRC}
}

@article{arikushiGraphsThatAdmit2012,
  title = {Graphs That Admit Right Angle Crossing Drawings},
  author = {Arikushi, Karin and Fulek, Radoslav and Keszegh, Bal{\'a}zs and Mori{\'c}, Filip and T{\'o}th, Csaba D.},
  year = {2012},
  journal = {Computational Geometry},
  volume = {45},
  number = {4},
  pages = {169--177},
  issn = {09257721},
  doi = {10.1016/j.comgeo.2011.11.008}
}

@article{ackermanMaximumNumberEdges2009,
  title = {On the {{Maximum Number}} of {{Edges}} in {{Topological Graphs}} with No {{Four Pairwise Crossing Edges}}},
  author = {Ackerman, Eyal},
  year = {2009},
  journal = {Discrete \& Computational Geometry},
  volume = {41},
  number = {3},
  pages = {365--375},
  issn = {0179-5376, 1432-0444},
  doi = {10.1007/s00454-009-9143-9}
}

@article{ackermanTopologicalGraphsMost2019,
  title = {On Topological Graphs with at Most Four Crossings per Edge},
  author = {Ackerman, Eyal},
  year = {2019},
  journal = {Computational Geometry},
  volume = {85},
  pages = {101574},
  issn = {0925-7721},
  doi = {10.1016/j.comgeo.2019.101574},
  keywords = {Crossing lemma,k-Planar graphs,Topological graphs}
}

@article{pachGraphsDrawnFew1997,
  title = {Graphs Drawn with Few Crossings per Edge},
  author = {Pach, J{\'a}nos and T{\'o}th, G{\'e}za},
  year = {1997},
  journal = {Combinatorica},
  volume = {17},
  number = {3},
  pages = {427--439},
  issn = {1439-6912},
  doi = {10.1007/BF01215922}
}

@article{pachImprovingCrossingLemma2006,
  title = {Improving the {{Crossing Lemma}} by {{Finding More Crossings}} in {{Sparse Graphs}}},
  author = {Pach, Janos and Radoicic, Rados and Tardos, Gabor and Toth, Geza},
  year = {2006},
  journal = {Discrete \& Computational Geometry},
  volume = {36},
  number = {4},
  pages = {527--552},
  issn = {1432-0444},
  doi = {10.1007/s00454-006-1264-9},
}

@article{didimoDrawingGraphsRight2011,
  title = {Drawing Graphs with Right Angle Crossings},
  author = {Didimo, Walter and Eades, Peter and Liotta, Giuseppe},
  year = {2011},
  journal = {Theoretical Computer Science},
  volume = {412},
  number = {39},
  pages = {5156--5166},
  issn = {03043975},
  doi = {10.1016/j.tcs.2011.05.025},
}

@article{angeliniRACDrawingsGraphs2020,
  title = {On {{RAC}} Drawings of Graphs with One Bend per Edge},
  author = {Angelini, Patrizio and Bekos, Michael A. and F{\"o}rster, Henry and Kaufmann, Michael},
  year = {2020},
  journal = {Theoretical Computer Science},
  volume = {828--829},
  pages = {42--54},
  issn = {03043975},
  doi = {10.1016/j.tcs.2020.04.018},
}

@article{pachTopologicalGraphsNo2005,
  title = {Topological {{Graphs}} with {{No Large Grids}}},
  author = {Pach, J{\'a}nos and Pinchasi, Rom and Sharir, Micha and T{\'o}th, G{\'e}za},
  year = {2005},
  journal = {Graphs and Combinatorics},
  volume = {21},
  number = {3},
  pages = {355--364},
  issn = {1435-5914},
  doi = {10.1007/s00373-005-0616-1},
}

@article{cheongNumberEdgesFanCrossing2015,
  title = {On the {{Number}} of {{Edges}} of {{Fan-Crossing Free Graphs}}},
  author = {Cheong, Otfried and {Har-Peled}, Sariel and Kim, Heuna and Kim, Hyo-Sil},
  year = {2015},
  journal = {Algorithmica},
  volume = {73},
  pages = {673--695},
  issn = {1432-0541},
  doi = {10.1007/s00453-014-9935-z},
}

@article{harborthEdgesWithoutCrossings1974,
  title = {Edges without Crossings in Drawings of Complete Graphs},
  author = {Harborth, Heiko and Mengersen, Ingrid},
  year = {1974},
  journal = {Journal of Combinatorial Theory, Series B},
  volume = {17},
  number = {3},
  pages = {299--311},
  issn = {0095-8956},
  doi = {10.1016/0095-8956(74)90035-5}
}

@article{dujmovicLargeAngleCrossing2011,
  title = {Notes on {{Large Angle Crossing Graphs}}},
  author = {Dujmovic, Vida and Gudmundsson, Joachim and Morin, Pat and Wolle, Thomas},
  year = 2011,
  journal = {Chicago Journal of Theoretical Computer Science},
  volume = {17},
  number = {1},
  pages = {1--14},
  issn = {1073-0486},
  doi = {10.4086/cjtcs.2011.004},
  urldate = {2026-03-02},
  langid = {english}
}

@article{binucciGraphsDrawnVertices2024,
  title = {Graphs {{Drawn With Some Vertices}} per {{Face}}: {{Density}} and {{Relationships}}},
  shorttitle = {Graphs {{Drawn With Some Vertices}} per {{Face}}},
  author = {Binucci, Carla and Di Battista, Giuseppe and Didimo, Walter and Dujmovi{\'c}, Vida and Hong, Seok-Hee and Kaufmann, Michael and Liotta, Giuseppe and Morin, Pat and Tappini, Alessandra},
  year = 2024,
  journal = {IEEE Access},
  volume = {12},
  pages = {68828--68846},
  issn = {2169-3536},
  doi = {10.1109/ACCESS.2024.3401078},
  urldate = {2024-07-29},
  keywords = {Beyond-planar graph drawing,Computational modeling,edge density,Face recognition,geometric graph theory,Graph drawing,Graph theory,graph visualization,inclusion relationships,Surveys,Upper bound,Visualization}
}
\end{document}